\newcommand{\ps}[2]{\left<#1,#2\right>}
\newcommand{\norm}[1]{\left\lVert#1\right\rVert}
\newcommand{\nor}[1]{\left\lvert#1\right\rvert}
\newtheorem{Theorem}{Theorem}
\newtheorem*{theoremA}{Theorem A}
\numberwithin{Theorem}{section}
\newtheorem{Proposition}[Theorem]{Proposition}
\newtheorem{Lemma}[Theorem]{Lemma}
\newtheorem{Corollary}[Theorem]{Corollary}
\theoremstyle{definition}
\newtheorem{Definition}[Theorem]{Definition}
\theoremstyle{remark}
\newtheorem{Remark}[Theorem]{Remark}
\numberwithin{equation}{section}
 \DeclareMathOperator*{\dist}{dist}
 \DeclareMathOperator{\R}{\mathbb{R}}
\DeclareMathOperator*{\tr}{tr}
\newcommand{\Mm}{\mathcal{M}_{K}^{-}}
\newcommand{\p}{\partial}
\newcommand{\osc}{\operatorname{osc}}
\begin{document}

\title{On the Geometry of Solutions of the Fully Nonlinear Inhomogeneous One-Phase Stefan Problem}

\author{Fausto Ferrari
  \and
  Davide Giovagnoli
  \and 
  David Jesus
}

\newcommand{\Addresses}{{
  \bigskip
  \footnotesize
Fausto Ferrari, Davide Giovagnoli, David Jesus \\
  \textit{E-mail addresses:} {\tt fausto.ferrari@unibo.it, d.giovagnoli@unibo.it, david.jesus2@unibo.it}\\
  \textsc{Dipartimento di Matematica,
Universit\`a di Bologna\\ 
Piazza di Porta San Donato 5, 40126 Bologna, Italy}
 }
}

\date{\today}

\maketitle
{\bf Abstract.} In this paper, we characterize the geometry of solutions to  one-phase inhomogeneous fully nonlinear Stefan problem  with flat free boundaries under a new nondegeneracy assumption.
This continues the study of regularity of flat free boundaries for the linear inhomogeneous Stefan problem started in \cite{FFGJ}, as well as justifies the definition of flatness assumed in \cite{wang2024free}.

\thispagestyle{fancy}
\fancyhead{} 
\fancyfoot{}
\fancyfoot[L]{\footnoterule {\small  
Keywords: Inhomogeneous Stefan problem, free boundary problems, flat free boundaries, parabolic operators, viscosity solutions. \\ MSC: 35R35, 35K55, 80A22}}

\section{Introduction}
In this paper we obtain geometric results concerning the free boundary of solutions of a fully nonlinear one-phase Stefan problem governed by an inhomogeneous equation. That is, for $\Omega \subset \mathbb{R}^{n}$, $T > 0$, we deal with nonnegative viscosity solutions $u: \Omega \times (0,T] \mapsto \mathbb{R}$ of the following  free boundary problem
    \begin{equation}\label{eq:StefanNH}
        \begin{cases}
          \partial_{t} u - F(D^2 u)  =  f& \text{in} \,\, \Omega^+(u):=\left(\Omega \times (0,T] \right) \cap \{ u > 0  \},\\
       \partial_{t} u = \nor{\nabla u}^2& \text{on} \,\, \mathcal{F}(u):=\left(\Omega \times (0,T] \right) \cap \partial\{ u > 0  \},
    \end{cases}
    \end{equation}
   where $F$ is a uniformly elliptic operator, $ f \in C(\Omega \times [0,T]) \cap L^{\infty }(\Omega \times [0,T])$ and \(\mathcal{F}(u)\) is the \textit{free boundary} of the solution. More precisely we relate the geometric condition of free boundary flatness in the evolutive framework to a suitable notion of solution flatness in the inhomogeneous case. This result, together with the methods introduced in \cite{DFS23} and further developed in \cite{FFGJ,wang2024free},  allows to conclude the analysis of the smoothness of flat free boundaries for solutions to \eqref{eq:StefanNH}.

  The Stefan problem attracts the attention of many researchers since it models fundamental physical phenomena, such as the process of melting ice or freezing water, and it plays a pivotal role within the context of free boundary problems.
   
We recap some results concerning the viscosity formulation of the Stefan problem. For a more extensive discussion we refer to \cite{CS}. We point out the trilogy of articles \cite{ACS,ACS2, ACS3} that collect important contributions on the regularity in the homogeneous two-phase Stefan problem. In \cite{ACS} it is proved the optimal Lipschitz regularity across the free boundary and in \cite{ACS2,ACS3} the authors focus their attention on the so-called strong results, which involve obtaining an improvement of regularity starting from an initial one. The parabolic structure of the problem permits the construction of some counterexamples (see for instance \cite[Section 8.3]{CS}) for which Lipschitz free boundaries do not instantaneously regularize. However, if the Lipschitz constant is small enough or a nondegeneracy condition to prevent the heat fluxes to vanish on $\mathcal{F}(u)$ is prescribed, further regularity of the free boundary can be obtained. Moreover, if only a nondegeneracy condition is assumed, in \cite{ACS3}, the authors proved that it is possible to reach the same conclusion for sufficiently "flat" free boundaries, which do not even need to be graphs of functions. 
For results in the variable coefficients case see \cite{ferrari2010regularity,FeSa_p}. We also point out the interesting paper \cite{kim2024regularity} about the Hele-Shaw flow, which is a closely related topic.

In \cite{DFS23}, the authors adapted the viscosity elliptic approach introduced in \cite{DeSilva2011} to deal with the parabolic case. They addressed the regularity in a parabolic homogeneous one-phase scenario, employing the hodograph transform and viscosity theory. In particular, they introduced a new notion of a flat free boundary within a parabolic framework. 
   
   All of the aforementioned results on the Stefan problem consider only the homogeneous situation. However, the tools introduced in \cite{DFS23} are flexible enough to consider a source term, provided that suitable difficulties are addressed.  In this order of ideas, we describe the difficulties tackled in this paper, using the physical example of ice melting as a reference.
Recall that for the one-phase scenario, the condition on the free boundary, $\partial_t u=|\nabla u|^2$, only describes the melting of ice, since the time derivative is nonnegative, and so the interface between the two states is pushed over time towards the ice side.
Hence, the positive part of $f$ adds heat to the system following the natural direction of the boundary condition. On the other hand, a more delicate situation appears if we allow the source term $f$ to change sign, since this introduces a competition between the effect of the law of conservation of energy at the free boundary and the external source. 
The presence of a negative source term slows down the melting process of ice, potentially leading to a degenerate situation where no regularization is expected.
Indeed, this informal discussion translates into a very precise technical difficulty that appears when $f$ is allowed to be negative, which is the fact that the Hopf Lemma, as it is stated in the homogeneous setting, is false in this case. Hence, a stronger nondegeneracy condition has to be assumed.

Our goal falls within the scope of proving that flat free boundaries are smooth under minimal assumptions. As in \cite{DFS23}, the first step in this strategy is to characterize the geometry of the graph of $u$ near the $\varepsilon_0$-flat free boundary. We say that the free boundary is $\varepsilon_0$-flat if, at each time, within a small ball centered at a point on the free boundary, it is contained in a narrow strip (see Definition \ref{Def:flat_FB}).
The main result, conveniently formulated for the parabolic rescaling of \eqref{eq:StefanNH}, is the following

\begin{Theorem} \label{Theorem:Flatsoltoflatfree}
   Let $K>1$. There exists  $0 <\lambda \leq 1$ such that if $u$ is a viscosity solution of
    \begin{equation}\label{eq:StefanPara}
        \begin{cases}
      \partial_{t} u - F(D^2 u)  = \lambda f(\lambda x, \lambda^2 t)& \text{in} \,\, \left(B_2 \times (-(K\lambda)^{-1},0]  \right) \cap \{ u > 0  \},\\
       \partial_{t} u = \lambda \nor{\nabla u}^2& \text{on} \,\, \left(B_2 \times (-(K\lambda)^{-1},0]\right) \cap \partial\{u > 0  \},
    \end{cases}
    \end{equation}
    which satisfies
    \[
    0 \leq u \leq K, \quad  0\in \partial_x\{ u(\cdot,0)>0\} \text{ and } \; \partial_x \{u(\cdot,t)>0\} \textit{ is $\varepsilon_0$-flat in } B_2,
    \]
and for all $t_0 \ \in (-(K\lambda)^{-1},0)$ there exists $x_0 \in \p B_{\frac{3}{4}}$
    \begin{equation}  \label{eq:nondegscal}
         \left( \fint_{B_{r}(x_0) \times (t_0,t_0+K^{-1}r^2)} u^{p_0} \, dx \, dt\right)^{\frac{1}{p_0}} \geq K^{-1} (1+\norm{f^{-}}_{\infty}) \quad  \text{ for } r \leq \frac{1}{8}.
    \end{equation}
    Then for all $\eta >0$ we have
    \[
    \bar a(t)(x_n-b(t)-\eta^{1+\beta})^+\leq u(x,t) \leq \bar a(t)(x_n-b(t)+\eta^{1+\beta})^+
     \quad \mbox{in }B_{\eta} \times [-\lambda^{-1}\eta,0],\]
     where the coefficients satisfy
\begin{equation} \label{eq:esta}
    c(1+\|f^- \|_\infty) \le \bar a(t) \le C, \quad | \bar a'(t)| \le \eta^{\beta-2}, \quad \quad b'(t)=- \lambda \bar a(t), \quad b(0)=0,
\end{equation}
and the constants $p_0, \,\beta\in(0,1)$ and $c,C>0$ depend only on $K$ and $n$, provided that $\varepsilon_0$ is sufficiently small, depending on $\eta$, $n$ and $K$.
\end{Theorem}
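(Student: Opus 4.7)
The plan is to run a compactness-contradiction argument in the spirit of \cite{DFS23}, in which the nondegeneracy hypothesis \eqref{eq:nondegscal} plays the role that the Hopf lemma cannot play when $f$ is allowed to change sign. The parameter $\lambda$ is chosen small enough that the free boundary speed $\lambda \nor{\nabla u}^2$ is dominated by interior regularity effects on the cylinder of interest. Fix $\eta>0$ and assume for contradiction that there exist $u_k$ satisfying all hypotheses with $\varepsilon_0=1/k\to0$, but violating the desired trapping on $B_\eta \times [-\lambda^{-1}\eta, 0]$.

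The first step is uniform compactness. Using $0\le u_k\le K$, uniform ellipticity of $F$, and $\varepsilon_0$-flatness of the free boundary, one obtains via the Lipschitz regularity across flat free boundaries (developed in \cite{ACS} and adapted to the inhomogeneous parabolic setting in \cite{FFGJ}) uniform parabolic Lipschitz estimates on compact subsets. The integral nondegeneracy \eqref{eq:nondegscal} passes to the limit and yields a matching lower bound, preventing $u_k$ from collapsing on the positivity side. Thus, up to subsequence, $u_k\to u_\infty$ locally uniformly with Hausdorff-converging free boundaries, and by the flatness these free boundaries collapse at each time slice to a hyperplane $\{x_n=b(t)\}$.

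Next I would classify $u_\infty$ on the small cylinder $B_\eta \times [-\lambda^{-1}\eta, 0]$. Tangential translation invariance of the problem forces $u_\infty(x,t)=v(x_n,t)$ to depend only on the normal variable and on time, and the free boundary condition $\partial_t v = \lambda v_n^2$ at $x_n = b(t)$, combined with interior $C^{1,\beta}$ regularity of viscosity solutions of the bounded-source parabolic equation, gives $v(x,t) = \bar a(t)(x_n-b(t))^+$ modulo a $C^{1,\beta}$ error of size $\eta^{1+\beta}$, where $\bar a(t) := v_n(b(t)^+,t)$. The free boundary condition then directly yields $b'(t)=-\lambda\bar a(t)$, the upper bound $\bar a(t)\le C$ follows from the $L^\infty$ bound on $u_\infty$, and $|\bar a'(t)|\le \eta^{\beta-2}$ comes from parabolic $C^{1,\beta}$ estimates rescaled to the cylinder of size $\eta$ in space and $\lambda^{-1}\eta$ in time.

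The main obstacle will be producing the lower bound $\bar a(t) \ge c(1 + \norm{f^-}_\infty)$ in \eqref{eq:esta}. This is precisely where the failure of the Hopf lemma under a negative source is felt: pointwise positivity of the gradient cannot be deduced from the equation alone, and must instead be extracted from the quantitative assumption \eqref{eq:nondegscal}. I would transfer the integral lower bound to the limit and compare $u_\infty$ with explicit barriers of the form $\alpha(x_n-b(t))^+ - C\norm{f^-}_\infty (x_n-b(t))_+^2$, calibrated to solve the equation with source $-\norm{f^-}_\infty$; the comparison principle then forces $\bar a(t)=\alpha\ge c(1+\norm{f^-}_\infty)$. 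Finally, the approximation for $u_\infty$ transfers to $u_k$ by uniform convergence for $k$ sufficiently large, contradicting the assumed violation of the conclusion and closing the argument.
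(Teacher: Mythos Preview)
Your compactness-contradiction outline is a genuinely different route from the paper's direct constructive argument, but it has two real gaps that would prevent it from closing.

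First, the lower bound $\bar a(t)\ge c(1+\|f^-\|_\infty)$. The nondegeneracy hypothesis \eqref{eq:nondegscal} lives at a point $x_0\in\partial B_{3/4}$, which is at unit distance from the free boundary, whereas $\bar a(t)$ is the normal derivative \emph{at} the free boundary. Your barrier $\alpha(x_n-b(t))^+ - C\|f^-\|_\infty (x_n-b(t))_+^2$ only compares with $u_\infty$ near the free boundary; it does not explain how information at $x_0$ propagates across the whole positivity set down to $\mathcal{F}(u)$ in the presence of a negative source. This propagation is exactly the content of the quantitative Hopf--Oleinik Lemma (Proposition~\ref{Theorem:CLN}), which the paper proves and then combines with the weak Harnack inequality to convert \eqref{eq:nondegscal} into a pointwise lower bound $u\ge c(1+\|f^-\|_\infty)\,\dist(\cdot,\partial B_R(x_0))$ reaching the free boundary. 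Without that ingredient your barrier step is circular: it presupposes a quantitative lower bound on $u$ near $\mathcal{F}(u)$ in order to conclude one.

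Second, the identity $b'(t)=-\lambda\bar a(t)$ does not drop out ``directly'' from the free boundary condition. Even in the limit, $u_\infty$ is only $C^{1,\beta}$-close to a plane, not exactly $\bar a(t)(x_n-b(t))^+$, so you cannot read off the ODE by substitution. The paper handles this by first obtaining approximate coefficients $\bar a(t),b(t)$, then defining $\tilde b$ to solve the exact ODE $\tilde b'=-\lambda\bar a$, and finally proving $|b-\tilde b|\le C\eta^{1+\gamma/2}$ via an explicit travelling-wave perturbation of the planes into strict sub/supersolutions of \eqref{eq:StefanPara}. This construction is the mechanism that ties the free boundary speed to the slope; a compactness argument does not provide it. A further, smaller issue: the uniform parabolic Lipschitz estimates you invoke for compactness are not available from the hypotheses (the paper explicitly avoids assuming Lipschitz continuity; see Section~\ref{Section:Nondeg}), so the compactness step itself needs independent justification.
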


 Theorem \ref{Theorem:Flatsoltoflatfree} states that, if
the free boundary of $u$ is $\varepsilon_0$-flat and $u$ satisfies \eqref{eq:nondegscal}, then the graph of $u$ near the free boundary is, at each time, trapped between two parallel hyperplanes that are very close together, see Figure \ref{Fig1}. This is a more useful property that plays a key role in obtaining higher regularity for the free boundary. Indeed from Theorem \ref{Theorem:Flatsoltoflatfree} we can directly invoke the machinery built in the recent work \cite{FFGJ} for the linear case, as well as \cite{wang2024free} for the fully nonlinear one. 
In particular, Theorem \ref{Theorem:Flatsoltoflatfree} for the case $f\equiv 0$ also justifies the definition of flatness considered in \cite{wang2024free}.

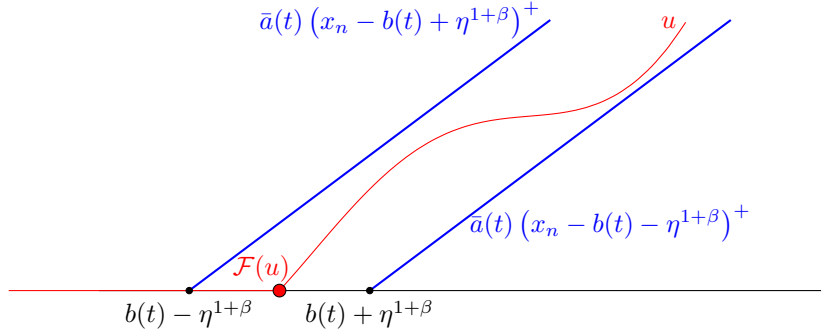
\begin{figure}[htbp]
    \centering
\begin{tikzpicture}[scale=1.2]
    \draw[-] (-1,0) -- (7,0);

    \draw[thick, blue] (0,0) -- (4,3) node[left] {$\bar a(t)\left(x_n -b(t) + \eta^{1+\beta} \right)^+$};
    \draw[thick, blue] (2,0) -- (6,3) node[near start, above, right] {$\bar a(t)\left(x_n -b(t) - \eta^{1+\beta} \right)^+$};
    
    \draw[scale=1, domain=-2:1, smooth, variable=\x, red] plot ({\x}, {0});
\draw[scale=1, domain=1:5.5, smooth, variable=\x, red] plot ({\x}, {sin(\x r -1 r)*1.1  + 0.2*(\x-1)*(\x-1)}) node[left] {$u$};
\filldraw (0,0) circle (1pt) node[below] {$b(t) - \eta^{1+\beta}$};
\filldraw[fill=red] (1,0) circle (2pt);
\node at (0.8,0.25) {\color{red} $\mathcal{F}(u)$};
    \filldraw (2,0) circle (1pt) node[below] {$b(t) + \eta^{1+\beta}$};
\end{tikzpicture}
\caption{The flatness condition expressed in  Theorem \ref{Theorem:Flatsoltoflatfree}}   
\label{Fig1}
\end{figure}

We now comment on the nondegeneracy condition \eqref{eq:nondegscal}. First, we emphasize that it depends on the $L^{\infty}-$norm of $f^{-}$. This fact appears to be sensible, since the model describes the melting of ice and the free boundary condition effect needs to prevail with respect to the contribution of the negative part of the source term. In fact, this ensures that close to the free boundary, the solution is \say{large enough to balance the effect of $f^{-}$}. This assumption prevents the water from freezing in the presence of a negative source term, which would conflict with the condition at the free boundary.

Furthermore, we remark that if $u$ is Lipschitz continuous with sufficiently small Lipschitz norm, the integral condition \eqref{eq:nondegscal} is equivalent to the more standad pointwise one 
\begin{equation*}
    u(x_0,t_0) \geq K^{-1}(1+\norm{f^-}_{\infty}).
\end{equation*}
Instead of assuming the Lipschitz continuity of $u$, we can replace \eqref{eq:nondegscal} with the alternative assumption 
\begin{equation} \label{eq:nondegpointf}
    u(x_0,t_0) \geq K^{-1}(1+\norm{f}_{\infty}),
\end{equation}
however it does not capture the difficulty associated with a negative source term, as discussed before. Notice that \eqref{eq:nondegpointf} reduces in the homogeneous case to the nondegeneracy condition considered in \cite{DFS23}. For a detailed discussion of this point, we refer to Section \ref{Section:Nondeg}.

A key step in proving Theorem \ref{Theorem:Flatsoltoflatfree} is the following refined version of the quantitative Hopf-Oleinik lemma obtained in \cite{caffarelli2013some} that we generalize for parabolic fully nonlinear equations. 
\begin{Proposition} \label{Theorem:CLN}
   Let $K>1$. For $0<T_1<T$ and $0<\delta<1$, there exist constants $\kappa, \mu>0$ which depend only on $n,\,\delta,\,T_1,\,T, \, K$ such that if $u$ is a viscosity solution of
    \begin{equation}\label{eq:cln}
    \begin{aligned}
        \begin{cases}
            \partial_t u- \mathcal M_K^- (D^2 u ) 
            \geq -\kappa  &\mbox{ in } B_1\times (0,T]\\
        u(x,0)\geq 1 &\mbox{ for } |x|\leq \delta\\
        u\geq 0 &\mbox{ in } \partial_p(B_1\times (0,T]).
        \end{cases}
    \end{aligned}
    \end{equation}
    Then
    \begin{equation} \label{eq:cln1}
            u(x,t)\geq \mu \, \dist(x,\p B_1)\quad \mbox{ in } B_1\times[T_1,T].
    \end{equation}
\end{Proposition}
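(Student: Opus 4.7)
The plan is to adapt the elliptic barrier argument of \cite{caffarelli2013some} --- which relies on the radial Pucci-superharmonic $\phi(x)=|x|^{-p}-1$ --- to the parabolic setting, combined with a preliminary Harnack step that propagates the initial positivity inward in time.

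\textbf{Step 1 (propagation of positivity).} I would first apply the parabolic weak Harnack inequality for supersolutions of $\mathcal M_K^-$ (available via Krylov--Safonov type arguments for fully nonlinear parabolic equations) to the function $u$, whose inhomogeneous term is bounded by $\kappa$. Combined with $u(\cdot,0)\ge 1$ on $\{|x|\le\delta\}$ and $\kappa$ taken sufficiently small, this yields a uniform interior positivity
\[
 u(x,t)\ge c_0>0 \quad\text{on}\quad \overline{B_{7/8}}\times[\tau_0,T],
\]
for some $\tau_0\in(0,T_1)$ and $c_0$ depending only on $n,K,\delta,T_1,T$.

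\textbf{Step 2 (radial Pucci barrier).} Pick $p=p(n,K)$ with $p>K^2(n-1)$. A direct computation in spherical coordinates of $D^2\phi$ for $\phi(x)=|x|^{-p}-1$ (one positive radial eigenvalue $p(p+1)|x|^{-p-2}$ and $n-1$ negative tangential eigenvalues $-p|x|^{-p-2}$) yields
\[
 \mathcal M_K^-(D^2\phi)\ge c_\phi:=p\bigl(K^{-1}(p+1)-K(n-1)\bigr)>0 \quad\text{on } \overline{B_1}\setminus B_{7/8},
\]
while convexity of $r\mapsto r^{-p}-1$ gives the linear lower bound $\phi(x)\ge p(1-|x|)$ on the same annulus.

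\textbf{Step 3 (modified barrier and comparison).} The central construction is the space--time modified barrier
\[
 w(x,t):=\mu\,\phi(x)-\nu\,(T-t)
\]
on the annular cylinder $(B_1\setminus\overline{B_{7/8}})\times[\tau_0,T]$, with $\mu,\nu>0$ to be selected so that (i) $\partial_tw-\mathcal M_K^-(D^2w)\le \nu-\mu c_\phi\le -\kappa$ (sub-solution), (ii) $\nu(T-\tau_0)\ge \mu\phi(7/8)$ so that $w\le 0\le u$ on the annular initial slice, (iii) $\mu\phi(7/8)\le c_0$ so that $w\le u$ on the inner wall $\partial B_{7/8}$, while $w\le 0\le u$ on $\partial B_1$ is automatic. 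The parabolic comparison principle for $\mathcal M_K^-$ (applied to $v:=u-w$ using the superadditivity $\mathcal M_K^-(A-B)\le\mathcal M_K^-(A)-\mathcal M_K^-(B)$) then gives $u\ge w$ on the whole annular cylinder, whence
\[
 u(x,t)\ge \mu\,p\,(1-|x|) - \nu(T-T_1)
\]
for $(x,t)$ in the annulus and $t\in[T_1,T]$; Step~1 covers the remaining interior $\overline{B_{7/8}}\times[T_1,T]$.

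\textbf{Main obstacle.} The delicate point is the interplay between the PDE constraint $\mu c_\phi\ge \kappa+\nu$ and the initial-slice one $\nu(T-\tau_0)\ge \mu\phi(7/8)$, which together force $T-\tau_0$ large relative to $\phi(7/8)/c_\phi$: when this is not automatic, one iterates Step~1 to push $\tau_0$ close to zero. Moreover, the bound above only carries the desired linear decay outside a ``dead zone'' $\{1-|x|\lesssim\nu(T-T_1)/(\mu p)\}$ where the barrier is non-positive. Closing this dead zone requires iterating the same construction on a decreasing family of thinner annuli, using the previous step's lower bound as boundary data, and ensuring the decay coefficient does not degenerate --- this iterative parameter tuning, rather than the individual barrier computation, is the principal technical hurdle, and is what forces the final constants $\kappa,\mu$ to depend on all of $n,\delta,T_1,T,K$.
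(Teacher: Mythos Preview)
Your approach is genuinely different from the paper's. The paper constructs a \emph{single} heat-kernel type barrier
\[
h(x,t)=\frac{1}{D}\Bigl((t+a)^{-b}e^{-K|x|^2/(t+a)}-F\Bigr)
\]
with parameters $a,b$ tuned in terms of $\delta,K,T$. This barrier already satisfies $h\le u$ on the \emph{entire} parabolic boundary (it is $\le 1$ on $\{|x|\le\delta\}\times\{0\}$, $\le 0$ on the rest of the bottom, and $\le 0$ on the lateral sides), so no preliminary Harnack step is needed. A direct computation shows $\partial_t h-\mathcal M_K^-(D^2 h)<-\kappa$ for small $T$, and $h(\cdot,T)$ vanishes linearly at $\partial B_1$, giving the bound at the terminal time; the interval $[T_1,T]$ is then covered by monotonicity of the constants in $T$ (and subdivision of $[0,T]$ when $T$ is large).

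Your scheme, by contrast, separates the argument into interior propagation (weak Harnack) plus an annular comparison with $w(x,t)=\mu\phi(x)-\nu(T-t)$. The obstacle you identify---the ``dead zone'' near $\partial B_1$ for $t<T$---is real, but the fix you propose does not close it. If you run a second step on a thinner annulus $B_1\setminus B_{r_2}$ using the first step's bound $u\ge\mu\phi-\nu(T-t)$ as boundary and initial data, the compatibility requirements force $\mu_2\le\mu$ and $\nu_2\ge\nu$, so the new barrier $w_2=\mu_2\phi-\nu_2(T-t)$ lies \emph{below} $w$; the iteration yields nothing new. More structurally, combining the subsolution constraint $\nu\le\mu c_\phi-\kappa$ with the initial-slice constraint $\nu(t_*-\tau_0)\ge\mu\phi(7/8)+\kappa\tau_0$ produces the $\mu$-independent restriction $t_*-\tau_0\gtrsim\phi(7/8)/c_\phi$, which cannot be relaxed by iteration in space.

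The correct salvage of your strategy is different: observe that at the \emph{terminal} time of the comparison there is no dead zone, since $w(x,t_*)=\mu\phi(x)\ge\mu p(1-|x|)$. So one should run the barrier on $[\tau_0,t_*]$ for each $t_*\in[T_1,T]$ separately. To make the time constraint $t_*-\tau_0>\phi(1-\epsilon)/c_\phi$ satisfiable when $T_1$ is small, choose the annulus width $\epsilon$ small (so that $\phi(1-\epsilon)/c_\phi\sim p\epsilon/c_\phi<T_1-\tau_0$), after first extending Step~1 to give $u\ge c_0(\epsilon)$ on $\overline{B_{1-\epsilon}}\times[\tau_0,T]$. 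This works, but the bookkeeping is heavier than the paper's direct barrier, which avoids both the Harnack step and the dead-zone issue entirely.
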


Now we explain how Theorem \ref{Theorem:Flatsoltoflatfree}  fits within the literature. Indeed Theorem \ref{Theorem:Flatsoltoflatfree} establishes that, after rescaling, if the graph of $u$ is $\eta^{\beta}$-flat (in the sense of Definition \ref{Def:flat_FB}), then by choosing $\eta$ sufficiently small, we can ensure that it falls within the framework of \cite[Theorem 1.1]{FFGJ}, which we state below.

\begin{theoremA}
    \label{Theorem:flatsolution}
Let $K>1$. There exists  $0 <\lambda \leq 1$ such that if $u$ is a viscosity solution of the one-phase Stefan problem \eqref{eq:StefanNH} in $B_{2\lambda} \times [-2K^{-1}\lambda,0]$ when $F$ is the trace operator, assume that $0\in \p \{u>0\},$ and
 $$ \bar a(t) \, \left (x_n - b(t)-\varepsilon_1 \lambda \right)^+  \le u(x,t)  \le \bar a(t) \, \left(x_n - b(t)+\varepsilon_1 \lambda \right)^+,$$
 with
 $$c(1+\|f^- \|_\infty) \le \bar a \le C, \quad \quad |\bar a'(t)| \le \lambda^{-2}, \quad \quad b'(t)=-\bar a(t),\quad\quad  \|f\|_{L^\infty}\leq \varepsilon_1^2,$$
for some small $\varepsilon_1$ depending only on $K$ and $n$. Then in $B_\lambda \times[-K^{-1}\lambda ,0]$ the free boundary $\p \{u>0\}$ is a $C^{1,\alpha}$-graph in the $x_n$ direction. 
\end{theoremA}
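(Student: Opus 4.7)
The natural strategy is the parabolic improvement-of-flatness scheme of \cite{DFS23}, adapted to the inhomogeneous setting as in the program of \cite{FFGJ,wang2024free}. The hypothesis provides the affine ansatz $P(x,t):=\bar a(t)(x_n-b(t))$ whose positive part approximates $u$ up to error $\varepsilon_1\lambda\bar a(t)$. The compatibility $b'(t)=-\bar a(t)$ together with $|\bar a'(t)|\le \lambda^{-2}$ ensures that $P^+$ is itself an approximate solution of the Stefan problem: on $\{x_n=b(t)\}$ one has $\partial_t P=-\bar a(t)b'(t)=\bar a(t)^2=|\nabla P|^2$, while the interior equation $\partial_t P - \tr(D^2 P)=0$ differs from $\lambda f$ only by $\|f\|_\infty\le \varepsilon_1^2$, negligible relative to $\varepsilon_1$. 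The proof reduces to an \emph{improvement of flatness}: at a fixed geometric scale $\rho\in(0,1)$, the flatness parameter drops from $\varepsilon_1$ to $\rho\,\varepsilon_1$ after updating $(\bar a,b)$ in a controlled way.

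\textbf{Partial Harnack and linearization.}
I would argue the one-step improvement by compactness and contradiction. Suppose there is a sequence of admissible triples $(u_k,\bar a_k,b_k,f_k)$ with flatness parameters $\varepsilon_k\downarrow 0$ but for which the improvement fails. Introduce the normalized excess
\[
\tilde u_k(x,t):=\frac{u_k(x,t)-\bar a_k(t)(x_n-b_k(t))^+}{\varepsilon_k\,\bar a_k(t)}
\]
on the positive set. A partial Harnack inequality---whose key ingredient is the quantitative Hopf--Oleinik Proposition~\ref{Theorem:CLN}, applied with interior nondegeneracy furnished by the structural bound $\bar a_k\ge c(1+\|f_k^-\|_\infty)$---yields uniform H\"older estimates for $\tilde u_k$ up to the free boundary and a uniform bound on the oscillation. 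Extracting a subsequence, $\tilde u_k$ converges locally uniformly to a limit $\tilde u_\infty$ on $(B_{\lambda}\cap\{x_n>0\})\times[-K^{-1}\lambda,0]$. Because $\|f_k\|_\infty\to 0$ and the trace operator linearizes to the heat operator around the constant-gradient profile $P$, the limit solves the oblique-derivative heat problem
\[
\partial_t\tilde u_\infty-\Delta\tilde u_\infty=0\quad\text{in }\{x_n>0\},\qquad
\partial_{x_n}\tilde u_\infty+\bar a_\infty^{-1}\partial_t\tilde u_\infty=0\quad\text{on }\{x_n=0\},
\]
the boundary condition being obtained by formally linearizing $\partial_t u=|\nabla u|^2$ around $P$.

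\textbf{Iteration and main obstacle.}
Standard boundary $C^{1,\alpha}$ theory for this linear problem provides, at scale $\rho$, an affine-in-$(x_n,t)$ approximation of $\tilde u_\infty$; pulling this back through the rescaling produces a new admissible pair $(\bar a_k+\delta a_k, b_k+\delta b_k)$ that yields $\rho\,\varepsilon_k$-flatness at scale $\rho$, contradicting the working hypothesis. The one-step improvement is then iterated along the parabolic scaling $u(x,t)\mapsto \rho^{-1}u(\rho x,\rho^2 t)$, which preserves the structural conditions $b'=-\bar a$, $c(1+\|f^-\|_\infty)\le \bar a\le C$, $|\bar a'|\le \lambda^{-2}$, and the smallness $\|f\|_\infty\le \varepsilon_1^2$ since $f$ rescales favourably. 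The iterated flatness gives, at each free-boundary point, a tangent affine profile whose slope depends H\"older-continuously on the base point, which is exactly the $C^{1,\alpha}$ graph property of $\partial\{u>0\}$ in the $x_n$-direction. The hard step is the partial Harnack inequality for $\tilde u_k$ when $f$ changes sign: both the interior Harnack inequality and the Hopf-type propagation of positivity must be quantitative and robust to adding $\lambda f$ with $\|f\|_\infty$ comparable to $\varepsilon_1^2$, and this is precisely the reason the stronger nondegeneracy $\bar a\ge c(1+\|f^-\|_\infty)$ is imposed in the hypothesis---it compensates, via Proposition~\ref{Theorem:CLN}, the slowdown produced by the negative part of the source and prevents the waiting-time estimates from degenerating along the sequence.
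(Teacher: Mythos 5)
This statement is not proved in the paper at all: Theorem A is quoted verbatim from \cite{FFGJ} (Theorem 1.1 there) and is used here only as a black box, to be combined with Theorem \ref{Theorem:Flatsoltoflatfree} to yield Corollary \ref{Corollary:FlatFreeBoundary}. So there is no internal proof to compare your argument against; what can be said is whether your outline would plausibly reconstruct the proof given in \cite{FFGJ} (which indeed follows the improvement-of-flatness scheme of \cite{DFS23} that you describe). At that level your plan identifies the right strategy, but as written it is a programme, not a proof, and two of its load-bearing steps are either unjustified or incorrectly set up.

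First, the ``partial Harnack'' step is asserted rather than argued, and the tool you invoke for it, Proposition \ref{Theorem:CLN}, is not the relevant one: that proposition propagates interior positivity from initial data for supersolutions of the Pucci inequality, whereas the Harnack-type oscillation decay needed for the compactness of the normalized excess $\tilde u_k$ must hold \emph{up to and across the free boundary}, and its proof requires sub/supersolution barriers built on the free boundary condition $\partial_t u=|\nabla u|^2$ itself (this is where the smallness $\|f\|_\infty\le\varepsilon_1^2$ relative to the flatness enters quantitatively); nothing in your sketch supplies these barriers or the uniform modulus for $\tilde u_k$ at the boundary. Second, your linearized problem is mis-derived. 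Writing $u=\bar a(t)\bigl(x_n-b(t)+\varepsilon w\bigr)^+$ and using $b'=-\bar a$, the condition $\partial_t u=|\nabla u|^2$ gives $\bar a^2+\varepsilon\bar a\,\partial_t w=\bar a^2\bigl(1+2\varepsilon\,\partial_{x_n}w\bigr)+O(\varepsilon^2)$, i.e.\ the transport-type condition $\partial_t\tilde u_\infty=2\bar a\,\partial_{x_n}\tilde u_\infty$ on $\{x_n=0\}$, not $\partial_{x_n}\tilde u_\infty+\bar a^{-1}\partial_t\tilde u_\infty=0$; the sign and the factor matter, since the regularity theory one must then quote (or prove) is for precisely this evolutionary oblique condition. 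Finally, the iteration bookkeeping is glossed over: the normalization by the time-dependent $\bar a_k(t)$ together with the non-small bound $|\bar a'|\le\lambda^{-2}$, the update of $(\bar a,b)$ so that $b'=-\bar a$ is preserved exactly (not just approximately) after each step, and the verification that viscosity sub/supersolution properties pass to the limit $\tilde u_\infty$ at free boundary points, are all places where the actual proof in \cite{FFGJ} has substantive content that your proposal does not reproduce.
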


Our main result Theorem \ref{Theorem:Flatsoltoflatfree} combined with Theorem A implies the $C^{1,\alpha}$ regularity of flat free boundaries when dealing with the heat operator.

\begin{Corollary}\label{Corollary:FlatFreeBoundary}
   Let $K>1$. There exists  $0 <\lambda \leq 1$ such that if $u$ is a viscosity solution of the one-phase Stefan problem \eqref{eq:StefanNH} in $B_{2\lambda} \times [-2K^{-1}\lambda,0]$ when $F$ is the trace operator, assume that $ 0\leq u \leq K \lambda$ and for all $t_0 \in (-K^{-1}\lambda,0)$ there exists $x_0 \in \p B_{\frac{3}{4}\lambda}$ 
    \begin{equation} \label{Nondeg_int}
       \left( \fint_{B_{r}(x_0) \times (t_0,t_0+K^{-1}r^2)} u^{p_0} \, dx \, dt\right)^{\frac{1}{p_0}} \geq K^{-1} (1+\norm{f^{-}}_{\infty}) \, \lambda \quad  \text{ for } r  \leq \frac{\lambda}{8},
    \end{equation}
     where $p_0 \in (0,1)$ is a universal constant.
    There exists $\varepsilon_0$ depending only on $K$ and $n$ such that if, for each $t$, $\p_x \{ u(\cdot, t) >0 \}$ is $\varepsilon_0-$flat in $B_{\lambda}$, then the free boundary $\p\{ u >0 \}$ (and $u$ up to the free boundary) is $C^{1,\alpha}$ in $B_{\frac{\lambda}{2}} \times [-(2K)^{-1} \lambda, 0]$.
\end{Corollary}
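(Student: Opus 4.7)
\medskip
\noindent\emph{Proof plan.} The approach is to chain Theorem~\ref{Theorem:Flatsoltoflatfree} (flat free boundary implies flat graph) with Theorem~A (flat graph implies $C^{1,\alpha}$ regularity). First I would parabolically rescale $u$ and verify the hypotheses of Theorem~\ref{Theorem:Flatsoltoflatfree}; then choose the auxiliary flatness parameter $\eta$ so that its output fits exactly into the hypotheses of Theorem~A at a well-chosen intermediate scale $\tilde\lambda$; finally repeat the construction at every free boundary point in $B_{\lambda/2}\times[-(2K)^{-1}\lambda,0]$ and cover.

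Set $v(y,s):=\lambda^{-1}u(\lambda y,\lambda^2 s)$, which is defined on $B_2\times[-2(K\lambda)^{-1},0]$, solves \eqref{eq:StefanPara} with source $\tilde f(y,s)=\lambda f(\lambda y,\lambda^2 s)$, and satisfies $0\le v\le K$. The change of variables $x=\lambda y,\ t=\lambda^2 s$ turns the integral nondegeneracy \eqref{Nondeg_int} on $u$ into \eqref{eq:nondegscal} for $v$ at the point $y_0=x_0/\lambda\in\partial B_{3/4}$ and radii $\rho=r/\lambda\le 1/8$: the factor $\lambda$ on the right-hand side of \eqref{Nondeg_int} is precisely compensated by the $u/\lambda$ rescaling, and $\|\tilde f^-\|_\infty=\lambda\|f^-\|_\infty\le\|f^-\|_\infty$. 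Similarly the $\varepsilon_0$-flatness of $\partial_x\{u(\cdot,t)>0\}$ in $B_\lambda$ transfers to $\varepsilon_0$-flatness for $v$ in $B_1$. Theorem~\ref{Theorem:Flatsoltoflatfree} then supplies, for any $\eta\in(0,1)$ chosen in advance and any $\varepsilon_0$ sufficiently small in terms of $\eta,n,K$, coefficients $\bar a(s),b(s)$ satisfying \eqref{eq:esta} and
\[
\bar a(s)\bigl(y_n-b(s)-\eta^{1+\beta}\bigr)^+\le v(y,s)\le \bar a(s)\bigl(y_n-b(s)+\eta^{1+\beta}\bigr)^+\quad\text{in }B_\eta\times[-\lambda^{-1}\eta,0].
\]

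Undoing the rescaling, the functions $\bar a_u(t):=\bar a(t/\lambda^2)$ and $b_u(t):=\lambda b(t/\lambda^2)$ trap $u$ in $B_{\lambda\eta}\times[-\lambda\eta,0]$ between $\bar a_u(t)(x_n-b_u(t)\pm\lambda\eta^{1+\beta})^+$, with $b_u'=-\bar a_u$, $b_u(0)=0$ and $|\bar a_u'|\le\lambda^{-2}\eta^{\beta-2}$. Choose $\eta:=(\varepsilon_1/2)^{1/\beta}$, with $\varepsilon_1$ the constant from Theorem~A, and set the intermediate scale $\tilde\lambda:=\lambda\eta/2$. Then $\lambda\eta^{1+\beta}=\varepsilon_1\tilde\lambda$ and $|\bar a_u'|\le\tilde\lambda^{-2}$ hold up to harmless universal factors (since $\eta\le 1$), so the trap above is exactly of the form required by Theorem~A at scale $\tilde\lambda$ around the origin. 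Taking $\lambda$ small enough to additionally ensure $\|f\|_\infty\le\varepsilon_1^2$, Theorem~A yields a $C^{1,\alpha}$ parametrization of $\partial\{u>0\}$ (and of $u$ up to it) in $B_{\tilde\lambda}\times[-K^{-1}\tilde\lambda,0]$.

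Finally, the same scheme applies after translating the origin to any free boundary point $(x_\ast,t_\ast)\in B_{\lambda/2}\times[-(2K)^{-1}\lambda,0]$, and a finite cover by the resulting parabolic neighborhoods upgrades the pointwise conclusion to the global $C^{1,\alpha}$ regularity on $B_{\lambda/2}\times[-(2K)^{-1}\lambda,0]$. The main technical difficulty I anticipate is ensuring that the integral nondegeneracy \eqref{Nondeg_int} survives these translations: the $\varepsilon_0$-flatness forces all free boundary points to lie in a thin horizontal slab, so the original nondegeneracy point $x_0\in\partial B_{3\lambda/4}$ sits at a definite distance from the translated free boundary, and a parabolic Harnack/Hopf-type propagation — essentially in the spirit of Proposition~\ref{Theorem:CLN} — produces an analogous nondegeneracy point relative to each new origin, at the cost of a harmless universal adjustment of the constant $K$.
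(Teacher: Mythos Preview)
Your strategy is precisely what the paper sketches: the Corollary is not given a separate proof there, only the remark that Theorem~\ref{Theorem:Flatsoltoflatfree} (after rescaling and a suitable choice of $\eta$) feeds directly into Theorem~A, followed by a covering.

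One caveat worth flagging: the sentence ``Taking $\lambda$ small enough to additionally ensure $\|f\|_\infty\le\varepsilon_1^2$'' is not correct as written. Shrinking the spatial scale does not shrink $\|f\|_{L^\infty}$, and no rescaling simultaneously preserves both the interior equation and the free boundary law in \eqref{eq:StefanNH} while multiplying $f$ by a small constant (parabolic rescaling introduces the factor $\lambda$ in front of $|\nabla u|^2$, hyperbolic rescaling destroys the PDE). The paper is equally silent here; the honest resolution is either that the Corollary tacitly inherits the smallness hypothesis $\|f\|_{L^\infty}\le\varepsilon_1^2$ from Theorem~A, or that the actual condition in \cite{FFGJ} is scale-weighted (of the type $\lambda\|f\|_\infty\le\varepsilon_1^2$), in which case your choice of $\tilde\lambda$ would indeed suffice. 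Your covering step and the Harnack propagation of the nondegeneracy point are also only sketched in the paper, so your level of detail there matches.
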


Note that the assumption $u\leq K\lambda$ in a set of size $\lambda$ is natural since this always holds for classical solutions, provided we choose $\lambda$ sufficiently small, see \cite{DFS23}.

Now we conclude describing the structure of the paper. In Section \ref{Section:preliminaries} we provide the necessary notation, the main definitions, and we include the proof of Proposition \ref{Theorem:CLN}. The proof of Theorem \ref{Theorem:Flatsoltoflatfree} is presented in Section \ref{Section:flatfbtoflatsol}. Then we conclude with Section \ref{Section:Nondeg} commenting the integral nondegeneracy condition \eqref{Nondeg_int} and its relation with the pointwise condition considered in \cite{DFS23}.



    \section{Preliminaries}\label{Section:preliminaries}
    \subsection{Notation and definitions} \label{Subsection:notation}
Let $(x,t)\in \R^{n+1}$, we call $x\in \mathbb{R}^n$ the space variable and $t \in \R$  the time variable. A point $x\in \mathbb{R}^n$ will sometimes be written as $x=(x',x_n)$, where $x'\in \R^{n-1}$ and $x_n \in \R$.
Given a function $u(x,t)$, we denote by $\nabla u$ the gradient of $u$ with respect to $x$, $D^2 u$ the Hessian with respect to $x$, $\nabla_{x'} u$ the gradient with respect to $x'$, $\partial_{x_n}u $  and $\partial_{t} u$ the partial derivatives of $u$ with respect to $x_n$ and $t$, respectively. 

A \textit{parabolic cylinder} $\mathcal{P}$ is any set of the form 
 $$\mathcal{P}=U \times (t_1,t_2],$$ 
 where $U$ is a smooth bounded domain in $\R^n$ and $t_1 < t_2$. We denote by $\partial_p \mathcal{P}$ the \textit{parabolic boundary} of $\mathcal{P}$, i.e., $\partial_p \mathcal{P}=\left( U\times \{t_1\}  \right)  \cup   \left( \partial U \times [t_1,t_2] \right) $.
 In particular, we define the \textit{standard parabolic cylinder} of radius $r$ as

 \[ 
 \mathcal{P}_r(x_0,t_0) = B_r(x_0) \times (t_0,t_0+r^2]
 \]
 We will omit the center if it is the origin.
Given $(x,t), (y,s)\in \R^{n+1}$, we define the \textit{parabolic distance} as
$$
d_p((x,t), (y,s)) :=\big(|x-y|^2 + |t-s|\big)^{1/2}.
$$
Let $C^{0}(\mathcal{P})$ be the set of continuous function in $\mathcal{P}$. For $\alpha \in (0,1]$ and $u\in C^{0}(\mathcal{P})$, define the parabolic H\"older semi-norm as
\begin{equation*}
    [u]_{C^{\alpha}(\mathcal{P})} := \sup_{\substack{(x,t)\neq (y,s)} \in \mathcal{P}} \frac{\nor{u(x,t)-u(y,s)}}{d_p((x,t),(y,s))^{\alpha}}.
\end{equation*}
The parabolic H\"older norm is defined as 
\begin{equation*}
    \norm{u}_{C^{\alpha}(\mathcal{P})} := \norm{u}_{L^{\infty}(\mathcal{P})} +[u]_{C^{\alpha}(\mathcal{P})}.
\end{equation*}
Further we define the H\"older semi-norm in time to be
\begin{equation*}
[u]_{C_t^{\alpha}(\mathcal{P})} := \sup_{ \substack{(x,t_1)\neq (x,t_2)}\in \mathcal{P}} \frac{|u(x,t_1)-u(x,t_2)|}{|t_1-t_2|^\alpha}.
\end{equation*}
This is necessary to introduce  the $C^{1,\alpha}(\mathcal{P})$-norm as
\begin{equation*}
    \norm{u}_{C^{1,\alpha}(\mathcal{P})}:=\norm{u}_{L^{\infty}(\mathcal{P})} + \norm{\nabla u}_{L^{\infty}(\mathcal{P})} +[\nabla u]_{C^{\alpha}(\mathcal{P})} + [u]_{C_t^{(1+\alpha)/2}(\mathcal{P})}.
\end{equation*}
We denote by $C^{\alpha}(\mathcal{P})$ (resp. $C^{1,\alpha}(\mathcal{P})$) the space of continuous (resp. differentiable in space) functions on $\mathcal{P}$ which are bounded in the $\norm{\cdot}_{C^{\alpha}(\mathcal{P})} $ (resp. $\norm{\cdot}_{C^{1,\alpha}(\mathcal{P})}$) norm. 
Throughout the paper, we will denote by K a large universal constant.
The operator $F:\mathcal{S}(d)\to\mathbb{R}$ is uniformly $K$-elliptic if it satisfies
\[
	K^{-1} |N|\leq F(M)-F(M+N) \leq K|N|,
\]
for every $M,N\in\mathcal{S}(d)$, with $N\geq 0$.
Recall the definition of extremal Pucci operators with ellipticity constants $K^{-1}$ and $K$  
\begin{equation}\label{Pucci} \mathcal M_K^+ (N) = \sup_{K^{-1} I \le A \le K I} \quad \tr (A N), \quad \quad  \quad \mathcal M_K^- (N) = \inf_{K^{-1} I \le A \le K I} \quad \tr ( A N).\end{equation} 
Let $f$ be a continuous function, we denote by $\underline{S}_K(f)$ the space of upper semi-continuous functions $u$ in $\mathcal{P}$ such that $\p_t u - M_K^+(D^2 u) \leq f$ in $\mathcal{P}$. The classes $\overline{S}_K(f), S_K(f)$ and $S_K^{*}(f)$ are defined as usual.
In the rest of the paper the constant $\alpha_0$ always refers to the universal exponent corresponding to the  $C^{1,\alpha_0}$ regularity  of solutions to the problem $\p_t u - F(D^2u)=0$ where $F$ is any $K$-elliptic operator, see for instance \cite{wang1992regularityII}. \\
In what follows $c,\, C, \, C_1, \dots$ denote universal constants that may vary from line to line. 

We now define what we mean by a viscosity solution of \eqref{eq:StefanNH}.
    \begin{Definition}\label{Def:viscosity_stefan}
     We say that $u\in C(\Omega\times [0,T])$ is a viscosity solution of \eqref{eq:StefanNH} if its graph cannot be touched by above (resp. below), at a point $(x_0,t_0)$ in a parabolic cylinder $B_r (x_0) \times (t_0 - r^2,t_0],$
      by the graph of a classical strict supersolution $\varphi^+$ (resp. subsolution). 
      By a classical strict supersolution we mean that $\varphi(x,t) \in C^{2}$, $\nabla \varphi \neq 0$ and it solves
    \begin{equation}\label{eq:StefanNH1}
        \begin{cases}
       \partial_{t} \varphi -F(D^2 \varphi) > f& \text{in} \,\, \left(\Omega \times (0,T] \right) \cap \{ \varphi > 0  \},\\
       \partial_{t} \varphi > \nor{\nabla \varphi}^2& \text{on} \,\, \left(\Omega \times (0,T] \right) \cap \partial\{ \varphi > 0  \}.
    \end{cases}
    \end{equation}
    Similarly we can define a strict classical subsolution.
    \end{Definition}

The definition of $\varepsilon_0-$flatness of the free boundary is given hereafter.
\begin{Definition}\label{Def:flat_FB}
    Let $\partial_x\{u(\cdot,t)>0\}$ denote the spatial boundary in $\R^n$ of $\{u(\cdot,t)>0\}$. We say that $\partial_x\{u>0\}$ is $\varepsilon_0$-flat in $B_\lambda(x_0)$ if, for every time $t$, there exists a direction $\nu$ such that
    \[
    \partial_x\{u(\cdot, t)>0\}\cap B_\lambda(x_0)\subset \{|\ps{x-x_0}{\nu}|\leq \varepsilon_0\lambda\}
    \]
    and
    \begin{align*}
        u=0 \quad &\textit{ in } \quad \{\ps{x-x_0}{\nu}\leq -\varepsilon_0\lambda\},\\
        u>0 \quad &\textit{ in } \quad \{\ps{x-x_0}{\nu}\geq \varepsilon_0\lambda\}.
    \end{align*}
\end{Definition}

\subsection{Parabolic Hopf-Oleinik lemma}
We devote this subsection to prove Proposition \ref{Theorem:CLN} which will be instrumental to prove Theorem \ref{Theorem:Flatsoltoflatfree}. We start by recalling the Weak Harnack inequality for fully nonlinear parabolic equations  \cite[Corollary 4.14]{wang1992regularityI}.
    \begin{Proposition}[Parabolic Weak Harnack inequality] \label{WeakHarnackIneq}
        Let $f \in L^{\infty}(\mathcal{P}_1) \cap C(\mathcal{P}_1)$ and $u$ be a nonnegative solution of $\partial_t u- F(D^2 u) \geq f \mbox{ in } \mathcal{P}_1.$
        Then for every $r<\frac{1}{4}$ it holds
        \[
        \left(\fint_{B_r\times(r^2, 2r^2)} u^{p_0}\right)^\frac{1}{p_0}\leq C_H\left(\inf_{B_r\times(3r^2, 4r^2)} u + r^2\|f\|_{L^\infty(\mathcal{P}_1)}\right)
        \]
        where $p_0 \in (0,1),\, C_H>0$ depend only on $n, \, K$.
    \end{Proposition}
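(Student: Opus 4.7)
The plan is to follow the classical two-step Hopf--Oleinik strategy adapted to the parabolic Pucci setting. Step one: propagate the initial positivity $u(\cdot,0)\geq 1$ on $\overline{B_\delta}$ to a uniform interior lower bound via iteration of the weak Harnack inequality (Proposition~\ref{WeakHarnackIneq}). Step two: construct an explicit spatial Hopf-type barrier against an interior tangent ball to obtain the linear-in-distance lower bound near $\partial B_1$. The smallness of $\kappa$ is used throughout to absorb the additive error $r^2\kappa$ appearing at each Harnack step and to leave strict room in the subsolution inequality used for the barrier comparison.

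For step one, I would cover $\overline{B_{1-\rho}}\times[T_1/2,T]$ by a finite family of overlapping parabolic cylinders, chosen so that each one is connected back to $\overline{B_\delta}\times\{0\}$ along a chain of bounded length depending only on $n,\delta,T_1$. Each application of Proposition~\ref{WeakHarnackIneq} transfers a quantitative pointwise lower bound forward in time, losing a multiplicative factor $C_H^{-1}$ at each link and incurring an error $Cr^2\kappa$ that is absorbed once $\kappa\leq\kappa_0$. The outcome is the existence of $c_0,\rho>0$ depending only on $n,K,\delta,T_1,T$ such that $u\geq c_0$ on $\overline{B_{1-\rho}}\times[T_1/2,T]$.

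For step two, fix $x_0\in\partial B_1$ and let $y_0=(1-\rho_1)x_0/|x_0|$ with $\rho_1:=\rho/2$, so that $\overline{B_{\rho_1}(y_0)}$ is internally tangent to $\overline{B_1}$ at $x_0$ and $\overline{B_{\rho_1/2}(y_0)}\subset\overline{B_{1-\rho}}$. On the annulus $\mathcal{A}_0:=B_{\rho_1}(y_0)\setminus\overline{B_{\rho_1/2}(y_0)}$ I would use the explicit Hopf barrier
\[
\psi(x)=\tau\bigl(e^{-\beta|x-y_0|^2}-e^{-\beta\rho_1^2}\bigr).
\]
A direct computation gives that $D^2\psi$ has one positive radial eigenvalue of order $\beta^2 e^{-\beta|x-y_0|^2}$ and $n-1$ negative tangential eigenvalues of order $-\beta e^{-\beta|x-y_0|^2}$; taking $\beta=\beta(n,K,\rho_1)$ large enough produces $\mathcal{M}_K^-(D^2\psi)\geq c_1\tau$ on $\mathcal{A}_0$. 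Choosing $\tau$ with $\tau\leq c_0/(e^{-\beta\rho_1^2/4}-e^{-\beta\rho_1^2})$ and $c_1\tau>\kappa$ (compatible for $\kappa$ small) makes $\psi$ a strict subsolution with $\psi\leq c_0$ on the inner sphere and $\psi=0$ on the outer sphere, while $\psi(x)\asymp 2\tau\beta e^{-\beta\rho_1^2}\dist(x,\partial B_1)$ near $x_0$.

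The main obstacle will be matching $\psi$ against $u$ at the initial time slice of the parabolic cylinder on which the comparison is run: $\psi>0$ on $\mathcal{A}_0$ whereas only $u\geq 0$ is known at an arbitrary initial time. I would address this by applying the comparison on the short cylinder $\mathcal{A}_0\times(t_*-\rho_1^2,t_*]$ for each target $t_*\in[T_1,T]$, invoking step one (with shifted starting time $T_1-\rho_1^2\geq T_1/2$) to guarantee $u\geq c_0$ on $\overline{B_{\rho_1/2}(y_0)}$ at $t=t_*-\rho_1^2$, and by replacing $\psi$ with a time-dependent modification (a moving-front truncation $\tau(\phi(x)-M(t))_+$, with $M$ decreasing from a value $>\phi|_{\partial\mathcal{A}_0^{out}}$ at $t_*-\rho_1^2$ down to $0$ at $t_*$) whose positive support starts inside $\overline{B_{\rho_1/2}(y_0)}$ and expands to fill $\mathcal{A}_0$ by time $t_*$. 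The strict margin $c_1\tau-\kappa>0$ is precisely what allows the extra time-derivative term to be absorbed while preserving the subsolution inequality $\partial_t\psi-\mathcal{M}_K^-(D^2\psi)\leq-\kappa$. Pucci comparison then yields $u(x,t_*)\geq\mu\dist(x,\partial B_1)$ for $x$ near $\partial B_1$, and combining with the trivial bound $u\geq c_0\geq c_0\dist(x,\partial B_1)$ on $\overline{B_{1-\rho}}$ (since $\dist(x,\partial B_1)\leq 1$) yields~\eqref{eq:cln1}.
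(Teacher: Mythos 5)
The statement you were asked to prove is the parabolic weak Harnack inequality of Proposition \ref{WeakHarnackIneq}: the bound of the $L^{p_0}$-average of $u$ over $B_r\times(r^2,2r^2)$ by $C_H\bigl(\inf_{B_r\times(3r^2,4r^2)}u+r^2\|f\|_{L^\infty(\mathcal{P}_1)}\bigr)$. Your proposal never establishes this estimate; what you sketch is a proof of Proposition \ref{Theorem:CLN} (the parabolic Hopf--Oleinik lemma): you start from $u(\cdot,0)\geq 1$ on $\overline{B_\delta}$, propagate positivity by Harnack chains, build an exponential Hopf barrier on an interior tangent annulus, and conclude $u\geq\mu\dist(x,\partial B_1)$. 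Moreover, your step one explicitly invokes Proposition \ref{WeakHarnackIneq} as its main tool, so read as a proof of that proposition the argument is circular: you assume precisely the inequality you were supposed to prove, and nowhere in the proposal is the integral-to-infimum bound with the error term $r^2\|f\|_{L^\infty}$ derived.

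For context, the paper does not prove Proposition \ref{WeakHarnackIneq} at all; it is quoted from Wang's regularity theory for fully nonlinear parabolic equations (Corollary 4.14 of the cited work). A self-contained proof would require genuinely different machinery of Krylov--Safonov type: a parabolic ABP estimate for functions in $\overline{S}_K(f)$, a barrier-based measure estimate for superlevel sets, a parabolic Calder\'on--Zygmund (ink-spots) covering argument giving power decay of the distribution function, and the resulting $L^{p_0}$ estimate in which the inhomogeneity enters through $r^2\|f\|_{L^\infty}$; none of these ingredients appears in your proposal. If your actual target was Proposition \ref{Theorem:CLN}, note also that the paper's proof of that lemma proceeds differently from your sketch: it uses a single explicit space-time barrier of Caffarelli--Li--Nirenberg type, shows it is a strict supersolution of the Pucci minimal equation for short times, compares once on $B_1\times(0,T]$, and iterates in time, with no Harnack chaining at all. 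Your chained-Harnack-plus-tangent-ball route is a reasonable alternative for that lemma, but it is not a proof of the statement under review.
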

Now we are in position to prove the Hopf-Oleinik Lemma for viscosity solutions of fully nonlinear parabolic equations, as stated in Proposition \ref{Theorem:CLN}. The proof follows the arguments presented in \cite{caffarelli2013some}.
\noindent\textit{Proof of Proposition~\ref{Theorem:CLN}.~}
Consider the barrier function $h$ as
$$
h(x,t):=\frac 1 D(E-F),
\ \ 
E= (t+a)^{-b} 
e^{ -\frac {K |x|^2}{ t+a}  },
\ \
F=(T+a)^{-b}
e^{ -\frac {K}{ T+a}  }, \ \ D= a^{-b}-F ,
$$
with
$$
\frac {T+a}{ 4a}= \frac 2{ \delta^2},\quad b:=
\frac K {  (T+a) \log(1+\frac Ta) }
=\frac{
 K (8-\delta^2) }
{ 8  \log( 8\delta^{-2}) } \frac{1}{T}. 
$$
Note that $h(x,T)=0 $ for $|x|= 1$, hence along the sides of $B_1 \times (0,T)$  the function $h$ is nonnegative being increasing in time, indeed
\begin{align*}
    \p_t h = \frac{E}{D}\frac{1}{(t+a)^2} \left[ K \nor{x}^2 - b(t+a )\right],
\end{align*} 
and  $b(T+a) \leq \frac{K}{\log(8\delta^{-2})} < K$ implies that 
\[
 \p_t h (x,t) =  \frac{E}{D}\frac{1}{(t+a)^2} \left[ K - b(t+a )\right] >0.
\]
Defined in this way we also have that
\begin{align*}
    h(x,0) \leq h(\frac{\delta}{2},0) = 0, \quad \text{for }|x| \geq \frac{\delta}{2} \qquad  h(x,0) \leq h(0,0) =1 \quad \text{for all }x. 
\end{align*}
Thus $h \leq u$ on $\partial_p(B_1\times (0,T])$. We need to prove that exists a  $\Tilde{T} > 0$ such that for every $T \leq \Tilde{T}$ we have 
\begin{equation} \label{eq:obj}
     \partial_t h - \mathcal M_K^- (D^2 h )  < -\kappa  \mbox{ in } B_1 \times (0,T], 
\end{equation} 
where $\kappa$ is some constant depending only on $n,K,\delta$ and $T$.
Computing the spatial derivatives of $h$
\begin{equation} 
    D^2 h = \frac{E}{D}\frac{2K}{(t+a)^2} \left[ 2K (x \otimes x) - (t+a) I_n  \right].
\end{equation}
By the property of the Pucci extremal operators \cite[Lemma 2.10 (3)-(5)]{cc} we have that
\begin{align*}
    \mathcal M_K^- (D^2 h ) \geq  \frac{E}{D}\frac{2K^2}{(t+a)^2} \left[ 2 \mathcal   M_K^-\left( x \otimes x \right) - (t+a) n  \right] = \frac{E}{D}\frac{2K}{(t+a)^2} \left[  2 \nor{x}^2 -  nK (t+a)   \right].
\end{align*}
Hence
\begin{equation} \label{eq:cln2}
\begin{aligned}
    \p_t h - \mathcal M_K^- (D^2 h )  \leq   \frac{E}{D}\frac{1}{(t+a)^2} \left[ K \nor{x}^2 - b(t+a) - 4 K \nor{x}^2 + 2nK^2 (t+a)\right]  \\ 
    = \frac{E}{D}\frac{1}{(t+a)^2} \left[ -3 K \nor{x}^2 + (t+a)(2nK^2 - b)\right] < - \kappa.
\end{aligned}
\end{equation}

Choosing $\Tilde{T}$ small enough depending only on $n,K,\delta$ we have that $2nK^2 - b  < 0$ which gives \eqref{eq:obj}. In order to recover the general argument for $T > 0$ one just needs to iterate the reasoning. Indeed, one can fix  a positive integer
$m$ so that $\frac Tm<\Tilde T$, and then apply
the result on $[0, \frac Tm]$, $[\frac Tm,
\frac {2T}m]$, ..., $[\frac {(m-1)T}{m}, T]$ successively.

Consider $T \in (0,\Tilde{T})$, in the following we show that $u \geq h $ in $B_1 \times (0,T]$ which implies $u(x,T) \geq \mu (1-|x|)$ for $x \in B_1$ where $\mu$ is some positive constant depending only on $n,K,\delta$ and $T$. In particular this gives \eqref{eq:cln1} since $\kappa$ and $\mu$ can be chosen to depend on $T$ monotonically. 
In order to achieve our goal consider a translation of the graph of $h$ in such a way that stays below $u$ in $B_1 \times (0,T]$. In particular for a constant $c_0$ let $h-c_0$ so that its graph touches that of $u$ from below at some point $(\bar x, \bar t) $. It suffices to prove that $c_0 \leq 0$. If $c_0 >0$ then $ (h-c_0) < u$ on $\partial_p(B_1\times (0,T])$ from the costruction of $h$. Hence $(\bar x, \bar t) \in B_1 \times [0,T]$ and since $u \in \overline{S}_K(-\kappa)$
we have at $(\bar x, \bar t)$
\[
\partial_t (h-c_0)- \mathcal M_K^- (D^2 (h-c_0) ) 
            \geq -\kappa
\]
which gives a contradiction with \eqref{eq:cln2}. 
\qed
\section{From flat free boundaries to flat solutions} \label{Section:flatfbtoflatsol}

The following lemma localizes the free boundary in time. This quantifies the increase in time of the support which is implied by the free boundary condition and the definition of flatness. 
\begin{Lemma} \label{Lemma:locationFB}
Let  $u$ be a solution of \eqref{eq:StefanPara}  in $B_2 \times (-K^{-1},1]$. Assume $0 \leq u \leq K$ and $\partial_x \{u(\cdot,t)>0\} \textit{ is $\varepsilon_0$-flat in } B_2$.
If $u(x,0)=0$ in $B_1$, then 
\begin{equation} \label{eq:2.1}
    u(x,t) \leq C\left( \nor{x} -1 \right)^{+}, \quad \text{in } B_2 \times (- (2K)^{-1},0] ,
\end{equation}
and
\begin{equation} \label{eq:2.2}
    u(x,t) = 0 \quad  \text{for } \nor{x} < 1 - C\lambda, \, t \in [0,1],
\end{equation}
with $C > 0$ universal.
\end{Lemma}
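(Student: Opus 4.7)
The plan is to prove (\ref{eq:2.1}) and (\ref{eq:2.2}) in sequence, first via a Pucci-type barrier for an extended PDE subsolution inequality, then via a moving-interface strict Stefan supersolution built on top. The starting observation is the one-phase monotonicity: the free boundary condition $\partial_t u = \lambda|\nabla u|^2 \geq 0$ prevents $\{u>0\}$ from contracting in time, so the zero set is non-increasing in $t$. Combined with $u(\cdot,0)\equiv 0$ on $B_1$, this immediately gives $u(\cdot,t)\equiv 0$ on $B_1$ for every $t \in [-K^{-1},0]$, which both disposes of (\ref{eq:2.1}) inside $B_1$ and supplies homogeneous data on $\partial B_1$ throughout the backward time interval.

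For (\ref{eq:2.1}) on the annulus $B_2\setminus\overline B_1$ I would argue pointwise in $t^* \in (-(2K)^{-1},0]$ on the slab $I = [t^* - (2K)^{-1},t^*] \subset [-K^{-1},0]$ against the barrier
\[
    \Phi(x,t) = A\bigl(1-|x|^{-\gamma}\bigr) + K\psi(t),
\]
with $\gamma = \gamma(K,n)$ large, $A=A(K,n)$ chosen so that $A(1-2^{-\gamma})\geq K$, and $\psi\in C^1(I;[0,1])$ a cutoff with $\psi(t^*-(2K)^{-1}) = 1$, $\psi(t^*) = 0$. Since $D^2(1 - |x|^{-\gamma})$ has one negative radial eigenvalue $-\gamma(\gamma+1)|x|^{-\gamma-2}$ and $n-1$ positive tangential eigenvalues $\gamma|x|^{-\gamma-2}$, for $\gamma$ large one obtains $\mathcal{M}_K^+(D^2\Phi) \leq -cA\gamma^2$ uniformly on the annulus, hence $\partial_t\Phi - F(D^2\Phi) \geq K\psi'(t) + cA\gamma^2 \geq \lambda\|f^+\|_\infty$ after enlarging $A\gamma^2$. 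The monotonicity step and the choice of $A$ ensure $\Phi \geq u$ on the parabolic boundary of $(B_2\setminus\overline B_1)\times I$. Because $u$ is a viscosity subsolution of $\partial_t - F \leq \lambda\|f^+\|_\infty$ in the whole annulus (it satisfies the equation in $\{u>0\}$, and at $\{u=0\}$ points any $\varphi\geq u$ touching from above has a local minimum there, forcing $\partial_t\varphi=0$ and $D^2\varphi\geq 0$), the standard Pucci comparison yields $u\leq\Phi$, and evaluating at $t=t^*$ gives $u(x,t^*)\leq A(1-|x|^{-\gamma})\leq A\gamma(|x|-1)^+$, which is (\ref{eq:2.1}) with $C=A\gamma$.

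For (\ref{eq:2.2}) I would propagate the $t=0$ estimate of (\ref{eq:2.1}) forward using the moving-interface barrier
\[
    \Psi(x,t) = A\bigl(1 - (|x|/R(t))^{-\gamma}\bigr)^+, \qquad R(t) = 1 - c_1\lambda t, \quad t\in[0,1],
\]
with the same $A,\gamma$ and $c_1=c_1(K,n)$ large. The PDE supersolution inequality in $\{\Psi>0\}$ follows from the same Pucci computation combined with $\partial_t\Psi = A\gamma c_1\lambda R^{\gamma-1}|x|^{-\gamma}>0$. On the free boundary $\{|x|=R(t)\}$ we have $|\nabla\Psi|=A\gamma/R$ and $\partial_t\Psi=A\gamma c_1\lambda/R$, so the Stefan condition $\partial_t\Psi>\lambda|\nabla\Psi|^2$ becomes $c_1>A\gamma/R$, which holds for $c_1\geq 3A\gamma$ once $R(t)\geq 1/2$ (valid since $\lambda$ is small). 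Part (1) gives $\Psi(\cdot,0)\geq u(\cdot,0)$ provided $A\gamma \geq C$, and $\Psi\geq A(1-2^{-\gamma})\geq K \geq u$ on $\partial B_2\times[0,1]$, so the comparison principle of Definition~\ref{Def:viscosity_stefan} gives $u\leq\Psi$. Since $\{\Psi(\cdot,t)=0\}=\overline{B_{R(t)}}$ and $R(1)=1-c_1\lambda$, we deduce $u(\cdot,1)\equiv 0$ on $B_{1-c_1\lambda}$, and the forward-in-time monotonicity of the zero set extends this to every $t\in[0,1]$.

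The main technical point is Part (2): balancing the parameters so that $\Psi$ is simultaneously a strict Stefan supersolution (forcing $c_1 \gtrsim A\gamma$), dominates $u$ on the parabolic boundary (requiring $A\gamma \geq C$ from Part (1) together with $A(1-2^{-\gamma}) \geq K$), and keeps $R(t) \geq 1/2$ on $[0,1]$ (so that $\lambda \leq (2c_1)^{-1}$). These constraints are compatible because $A,\gamma,c_1$ are fixed universally, depending only on $K,n$ and $\|f\|_\infty$, before $\lambda$ is taken small. The viscosity comparison is legitimate since $\Psi$ is $C^2$ with $\nabla\Psi\neq 0$ on its free boundary, qualifying as a classical strict supersolution in the sense of Definition~\ref{Def:viscosity_stefan}.
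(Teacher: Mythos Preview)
Your argument is correct and follows the same two--step barrier strategy as the paper, but with different explicit barriers. For \eqref{eq:2.1} the paper does not build a barrier at all: it lets $v$ solve $\partial_t v - F(D^2 v)=\lambda f$ in the annulus with $v=0$ on $\partial B_1$ and $v=K$ on the rest of the parabolic boundary, and then reads off $v\le C(|x|-1)$ from the $C^{1,\alpha}$ boundary estimates of Wang. Your power--type barrier $\Phi=A(1-|x|^{-\gamma})+K\psi(t)$ replaces this black box by an explicit computation, which is more self--contained but heavier. For \eqref{eq:2.2} both proofs use a radially symmetric travelling front; the paper takes $w=C_0\,g(|x|-r(t))$ with $g$ solving $g''+2ng'=0$, $r(t)=1-C_0\lambda t$, whereas you take $\Psi=A\bigl(1-(|x|/R(t))^{-\gamma}\bigr)^+$. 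The checks (strict PDE supersolution in the positivity set, strict Stefan condition on the moving front, ordering on the parabolic boundary from \eqref{eq:2.1}) are essentially the same.

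Two small points. First, your claim $\mathcal{M}_K^+(D^2\Phi)\le -cA\gamma^2$ ``uniformly on the annulus'' is not literally correct as written: the eigenvalues carry a factor $|x|^{-\gamma-2}$, so the uniform bound on $B_2\setminus B_1$ is $\mathcal{M}_K^+(D^2\Phi)\le -cA\gamma^2\,2^{-\gamma-2}$. Since $\gamma$ is fixed universally before $A$ is chosen, this only changes the value of the universal constant and the argument goes through unchanged; just absorb $2^{-\gamma-2}$ into $c$. Second, in the last line you do not need to invoke monotonicity of the zero set: the comparison $u\le\Psi$ already gives $u(\cdot,t)=0$ on $B_{R(t)}\supset B_{1-c_1\lambda}$ for every $t\in[0,1]$ directly.
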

\begin{proof}
    From the free boundary condition and the flatness assumption it follows immediately that the support of $u$ is increasing in time, hence $u=0$ in $B_1$ for $t \in [-K^{-1},0]$. Consider $v$ a solution of
 \begin{equation}\label{eq:CompareFunction}
        \begin{cases}
      \p_t v - F(D^2v) = \lambda f & \text{in} \,\, \left(B_2 \setminus B_1 \right) \times (-K^{-1},0]  ,\\
      v= 0 & \text{on} \,\,  \partial B_1 \times [-K^{-1},0] , \\
      v= K & \text{on}  \,\, \left( \partial B_2 \times [-K^{-1},0] \right) \cup \left( \left( B_2 \setminus B_1 \right) \times \{ -K^{-1} \}\right),
    \end{cases}
    \end{equation}
then $u-v \leq 0 $ on $\partial_{p}\left( (B_2 \setminus B_1) \times (-K^{-1},0]\right)$ and since $u - v \in S_K$ applying maximum principle allows to conclude that $u \leq v$ in $(B_2 \setminus B_1) \times  [-K^{-1},0]$.
For the regularity up to the boundary given by \cite[Theorem 2.1]{wang1992regularityII} and if we
let $\Bar{x} = \frac{x}{\nor{x}} \in \partial B_1$, we can see that
\begin{align*}
    u(x,t)  \leq v(x,t) = v(x,t) - v(\Bar{x},t) \leq \left( \max_{ \overline{B_2} \setminus B_1} \nor{\nabla v
    (\cdot, t)}\right) \nor{x-\Bar{x}} \leq C \nor{x-\frac{x}{\nor{x}}} = C(\nor{x}-1).
\end{align*}
If we extend the comparison in $B_1 \times [-K^{-1},0]$ the estimate \eqref{eq:2.1} follows. \\
For $t \in [0,1]$ we compare $u$ with 
\begin{equation*}
    w(x,t) = C_{0}\, g( \nor{x} - r(t)), \quad r(t):= (1-C_0 \lambda t),
\end{equation*}
with $g$ such that $g(s) = 0 $ for $s \leq 0$, and for $s$ positive it satisfies the ODE 
\begin{equation*}
\begin{cases}
    g''(s) + 2ng'(s)= 0 \text{ for } s > 0, \\
    g(0) = 0, \quad g'(0) = 1.
\end{cases}
\end{equation*}
Note that it is possible to compute explicitly $g$ as
\begin{equation*}
    g(s) = \frac{1-e^{-2ns}}{2n} \chi_{[0,\infty[}(s),
\end{equation*}
 for $(x,t) \in B_2 \times [0,1]$ we have that $\nor{x} - r(t) \leq 1+C_0 \lambda$  implies that $ g' \in [e^{-2n(1+C_0 \lambda)},1]$.
We may assume that $r(t) \geq 1/2$, otherwise if $C > 2 C_0$ then $1-C\lambda <0$ and the conclusion \eqref{eq:2.2} is trivial. From this we get that 
\begin{equation*}
    g(\nor{x}- r(t)) \geq 0 \Leftrightarrow  \nor{x} \geq r(t) \Rightarrow \nor{x} \geq \frac{1}{2}.
\end{equation*}
Exploiting the fact that $f$ is bounded in $B_2 \times [0,1]$, and assuming
ùthat $C_0$ is chosen such that $w \geq u$ at time $t=0$, from \eqref{eq:2.1},  also holds on $\p B_2 \times [0,1]$ and is a supersolution of \eqref{eq:StefanPara}, in fact
\begin{equation*}
\begin{aligned}
     \partial_{t} w - \Mm(D^2 w)& = \p_t w - K^{-1}\tr( [D^2 w]^+) + K \tr( [D^2 w]^-) \\
    & =C_0^2\lambda g' - C_0 \left[ K^{-1} (n-1) \frac{g'}{|x|} - K \left(g'' - \frac{g'}{|x|}\right)\right] \\&= C_0g' \left(C_0  \lambda - K^{-1}\frac{n-1}{|x|}- 2nK -K \frac{1}{|x|}\right) \\ 
    &\geq C_0g' (C_0 \lambda - 4nK) >f
    \quad \text{in } \{w >0\}, \\
     \partial_t w &= \lambda C_0^2 = \lambda \nor{\nabla w}^{2} \quad \text{on } \partial\{w >0 \} .
\end{aligned}
\end{equation*}
Hence $w$ cannot touch by above $u$ both in $\{w > 0 \}$ and in $\partial \{ u >0 \}$ which means that $u \leq w \quad \text{in } B_2 \times [0,1]$ and leads to \eqref{eq:2.2}. 
\end{proof}

We are now ready to provide the proof of Theorem \ref{Theorem:Flatsoltoflatfree}. \\ 

\noindent\textit{Proof of Theorem~\ref{Theorem:Flatsoltoflatfree}.~}
The proof of this result follows the strategy of \cite{DFS23}. The presence of a possibly negative source term introduces new difficulties. The argument revolves around trapping the solution between two parallel affine functions at each time: from below with the Hopf-Oleinik Lemma in Proposition \ref{Theorem:CLN} and from above exploiting up to the boundary regularity of solutions of the homogeneous equation. Then we perturb the family of evolving planes into travelling waves.\\ 
Since $(0,0) \in \p \{ u > 0\}$, from the $\varepsilon_0-$flatness of $\p_x \{ u > 0\} $, possibly after a rotation, we can reduce to the situation in which
\[  u(x,0) > 0 \text{ in } \{ x_n > \varepsilon_0 \}\cap B_2,  \quad  u(x,0) = 0 \text{ in } \{ x_n < - \varepsilon_0 \}\cap B_2.
\]
For any $x \in B_{1/2} \cap \{x_n > - \varepsilon_0  \}$, consider a ball $B_r(x_0)$ tangent from below to $ \{x_n = -\varepsilon_0 \}$ 
with $x_0=(x',-\varepsilon-r)$ with $r$ such that $B_{2r}(x_0) \subset B_{\frac{3}{4}}$. In this way rescaling \eqref{eq:2.1} gives
\begin{equation} \label{eq:ineq00}
    u(x,t) \leq C(|x-x_0| -r)^+ = C(x_n + \varepsilon_0)^+ \quad \text{in } B_{1/3} \times [-(3K)^{-1},0]
\end{equation}
Now consider a parabolic rescaling of $u$, namely $u_\tau(x,t) := \frac{1}{\tau}u(\tau x, \tau^2 t)$, applying \eqref{eq:ineq00} we have
\begin{equation} \label{eq:ineq1}
    u_\tau \leq C (x_n + \frac{ \varepsilon_0}{ \tau})^+ \leq  C (x_n + \tau)^+ \text{ in } B_{\frac{1}{3\tau}} \times [-(3K\tau^2)^{-1},0]
\end{equation}
for some $\tau \geq \varepsilon_{0}^{1/2}$. So for $\tau \in (\varepsilon_0^{1/2},c)$ we have $u_\tau \leq C(x_n +\tau)^+$ in $B_1 \times (-2,0)$ where $c$ is universal.
Note further that $u_\tau $ satisfies \eqref{eq:StefanPara} with $\tau \lambda$ instead of $\lambda$. Applying \eqref{eq:2.2} to $u_{\tau}$ we obtain that \begin{equation} 
    u_\tau = 0 \text{ in } \{x_n \leq -\tau - C\lambda \tau  \} \times [-1,0].
\end{equation} 
 And since $(0,0) \in \p \{ u >0 \}$ we have
 \begin{equation}\label{eq:flat1}
     (\p_x \{ u_\tau(\cdot, t) > 0\} \cap B_{1/2}) \cap \{ x_n \leq C\lambda \tau \} \neq \emptyset \quad \text{ for all }t \in [-1,0].
 \end{equation} 
Moreover, $\p_x \{ u_\tau > 0\}$ is $\tau^{-1} \varepsilon_0$-flat in $B_1$, which together with \eqref{eq:flat1} gives
 \begin{equation} \label{eq:flat2}
      \p \{ u_\tau > 0\} \cap (B_{1/2} \times [-1,0]) \subset \{ x_n < C(\varepsilon_0 \tau^{-2}+ \lambda) \tau\} .
 \end{equation}
Now since $\tau \geq \varepsilon_{0}^{1/2}$, we compare $u_\tau$ in $(B_{1/2} \cap \{x_n > C(1+\lambda)\tau \} )\times [-1,0] =\Omega$ with $w$ which satisfies
 \begin{equation}\label{eq:CompareFunction1}
        \begin{cases}
\p_t w - F(D^2 w) = 0 & \text{in} \,\, \Omega ,\\
      w= 0 & \text{on} \,\, (B_{1/2} \cap \{x_n = C(1+ \lambda) \tau  \}) \times [-1,0] = A_\tau  \times [-1,0], \\
      w= u_\tau & \text{on}  \,\, \p_p \Omega \setminus (A_\tau \times [-1,0]).
    \end{cases}
    \end{equation}
    Note that by \eqref{eq:ineq1} we have that $|u_\tau - w| \leq C((1+\lambda)C\tau + \tau)\leq C_2 \tau $ on  $ \p_p \Omega$  on  $ \p_p \Omega$. Thus, by maximum principle \cite[Cor. 3.21]{wang1992regularityI}, since  $u_\tau - w \in \underline{S}_K(\lambda \tau f)$,  we have
    \begin{equation} \label{eq:est22}
    \sup_{\Omega} |u_\tau - w| \leq \sup_{\p_p \Omega} |u_\tau - w| + C\lambda \tau \|f^- \|_{\infty} \leq C_2(1+ \lambda \norm{f^-}_{\infty}) \tau.
\end{equation}
 Furthermore, the $C^{1,\alpha_0}$ regularity up to the boundary of $w$ on the flat boundary $(B_{1/2} \cap \{ x_n = C(1+\lambda)\tau\}) \times [-1,0]$ (see \cite[Theorem 2.1]{wang1992regularityII}) implies that there exists a constant $\bar a$, $|\bar a| \leq C_1$ with $C_1$ universal, such that for any $\rho$ small
\begin{equation} \label{eq:est23}
    |w - \bar a x_n| \leq C_3 \rho^{1+\alpha_0} \quad \text{in } B_{\rho}^+ \times [-\rho^2,0]
\end{equation}
 where $ B_{\rho}^+:= (B_\rho(0,C(1+\lambda)\tau) \cap \{  x_n > C(1+\lambda)\tau \})$. Thus \eqref{eq:est23} combined with \eqref{eq:est22} gives
\begin{equation}\label{ustau}
|u_\tau - \bar a x_n| \le C_3 \rho^{1+\alpha_0} + C_2(1+ \lambda \norm{f^-}_{\infty}) \tau \le C(1+\lambda \norm{f^-}_{\infty}) \rho^{1+\alpha_0} \quad \quad \mbox {in} \quad B^+_{\rho} \times[-\rho^2,0],
\end{equation}
 provided that we choose $\tau = \rho^{1+\alpha_0}$ with $\rho$ small, to be made precise later. Now we make the following claim.

 \noindent \textbf{Claim}: The nondegeneracy condition \eqref{eq:nondegscal} implies that $\bar a>c(1+\|f^- \|_{\infty})$, where $c>0$ is universal.

\textit{Proof of the claim:} From \eqref{eq:flat2} it follows that 
\[
u_\tau >0 \text{ in } (B_{1/2} \cap \{x_n>C(\varepsilon_0 \tau^{-2} + \lambda)\tau \}) \times [-1,0]
\]
which written in terms of $u$ reads as
\begin{equation} \label{eq:loc1}
    u >0 \text{ in } (B_{\tau/2} \cap \{x_n>C(\varepsilon_0  + \lambda\tau^2) \}) \times [-\tau^2,0].
\end{equation}
In particular, \eqref{eq:loc1} ensures that $\p_x \{ u( \cdot, t) >0 \} $ lies below $\{x_n = C (\varepsilon_0 + \lambda |t|) \}$ at all times $t= -\tau^2 \leq -\varepsilon_0$. 
Since \eqref{eq:nondegscal} holds, we can assume $x_0 \in \p B_{3/4} \cap \{x_n  >  C (\varepsilon_0 + \lambda |t|)  \}$ which is the worst case of $x_0$ in the farthest position away from the free boundary.  Now, defining $R:= d(x_0,\p_x \{u(\cdot, - (K\lambda)^{-1}) >0 \})$, since the support of $u$ increases with time, then $u$ is a viscosity solution of $\p_t u - F(D^2 u ) = \lambda f$ in  $B_R(x_0) \times [- (K\lambda)^{-1},0]$. Moreover combining weak Harnack inequality of Proposition \ref{WeakHarnackIneq}, applied with $-\lambda f^{-}$ as the right hand side, with the nondegeneracy \eqref{eq:nondegscal} for any $r \leq \min\{\frac{1}{8},\frac{R}{2}, (2KC_H)^{-1/2}\}$ we get 
\begin{align} \label{eq:weak11}
    &\inf_{B_r(x_0)\times(t_0+K^{-1}r^2, t_0+2K^{-1}r^2)} u \geq \frac{1}{C_H}  \left(\fint_{B_r(x_0)\times(t_0,t_0+K^{-1}r^2)} u^{p_0}\right)^\frac{1}{p_0} - r^2 \lambda \|f^-\|_{\infty} 
     \\ &\geq \frac{(1+\|f^-\|_\infty)}{K C_H}   - r^2 \lambda \|f^-\|_{\infty} \geq  (2KC_H)^{-1} (1+\| f^-\|_\infty) \quad \text{for }t_0 \in (-(K\lambda)^{-1},0)
\end{align}
Since $\lambda \leq 1$, the particular choice $t_0 = -\frac{1}{2K}(1+3r^2) \in (-K^{-1},0)$ implies that 
$$\Tilde{T}:= \frac{1}{2K} \in (t_0+K^{-1}r^2, t_0+2K^{-1}r^2)= (-\frac{1}{2K}(1+r^2), -\frac{1}{2K}(1-r^2))\subset (-\frac{1}{K},0),$$
and thus
\begin{equation} \label{eq:defT}
    u(x,\Tilde{T}) \geq (2KC_H)^{-1} (1+\| f^-\|_\infty) \quad \textit{for }x \in B_r(x_0) 
\end{equation}
Define now the rescaled $\bar u= A u(Rx + x_0, R^2 t)$, $\bar T:=\frac{\Tilde{T}}{R^2} $ and $\bar F (M):= AR^2F(\frac{M}{AR^2})$, which is still uniformly $K$-elliptic.
Choosing conveniently 
\[
 A:=\frac{4KC_H}{R^2 (1+\| f^-\|_{\infty})}, \qquad \delta:=R^{-1}r
\]
the function $\bar u$  solves
\begin{align*}
        \begin{cases}
            \partial_t \bar u- \bar F(D^2 \bar u ) \geq-\kappa &\mbox{ in } B_1\times (\bar T,0]\\
        \bar u(x,\bar T)\geq 1 &\mbox{ for } |x|\leq \delta\\
        \bar u\geq 0 &\mbox{ in } \partial_p(B_1 \times (-(2R^2K\lambda)^{-1},0].
        \end{cases}
    \end{align*}
The first condition is guaranteed for $ \lambda \leq \frac{\kappa}{4KC_H}$ 
 \[
 -AR^2 \lambda \|f^-\|_\infty = -4K C_H \lambda \frac{\| f^-\|_{\infty}}{(1+\| f^-\|_{\infty})} \geq -4K C_H \lambda, 
\]
 and the second follows from \eqref{eq:defT}
 \[
\bar u(x,\bar T) = A u(R x+ x_0,\tilde T) \geq A (2KC_H)^{-1} (1+\| f^-\|_\infty) \geq \frac{2}{R^2} \geq 1.
 \]
The third condition follows directly from $u \geq 0$ in $\p_p (B_R(x_0) \times (-(2K\lambda)^{-1},0])$. We are now in position to apply Proposition \ref{Theorem:CLN} to $\bar u$ which gives for $T_1 \in (\bar T,0]$
\begin{align*}
    \bar u(x ,t) \geq \mu (1-|x|)  \quad \text{for }(x,t) \in B_1 \times (T_1,0).
\end{align*}
Rescaling back to the original function $u$ gives
\begin{equation}\label{eq:hopf1}
\begin{aligned}
    A u(Rx + x_0, R^2 t) &\geq (1-|x|) \quad \text{for }(x,t) \in B_1 \times (T_1,0) \\
    u(y,s) &\geq \frac{\mu}{A}\left(1-\left|\frac{y-x_0}{R}\right|\right) = \frac{\mu R(1+\| f^-\|_{\infty})}{4KC_H}d(y, \p B_R(x_0)) \\
    & \geq \frac{\mu}{8KC_H}(1+\| f^-\|_{\infty})d(y, \p B_R(x_0)) \quad \text{for }(y,s) \in B_R(x_0) \times (R^2 T_1,0).
\end{aligned}
\end{equation}
where in the second line we have replaced $y=Rx+x_0$ and $s=R^2t$. 
Rewriting \eqref{ustau} with $\beta=\frac{1}{1+\alpha_0}$ we reach
\begin{equation} \label{eq:hopf3}
\begin{aligned}
    \left|\frac{1}{\tau}u(\tau x,\tau^2 t) - \bar a x_n \right| &\leq C(1+\lambda\| f^-\|_\infty)\tau \\
    \left|u(\tau x,\tau^2 t) - \bar a \tau x_n \right| &\leq C(1+\lambda\| f^-\|_\infty)\tau^2 \quad \text{in }B_{\tau^{\beta}}^+ \times (-1,0) \\
     \left|u(y,s) - \bar a y_n \right| &\leq C(1+\lambda\| f^-\|_\infty)\tau^2 \quad \text{in }P_\tau 
\end{aligned}
\end{equation}
where we have made the substitution $\tau x=y, \tau^2t=s$ and $P_\tau$ is defined as $P_\tau:=(B_{\tau^{\beta+1}}(0,C(1+\lambda)\tau^2) \cap \{ y_n \geq C(1+\lambda)\tau^2\}) \times (-\tau^2,0)$. Now choosing $(y,s) \in P_\tau, \, y=(0,C(1+\lambda)\tau^2 + \frac{1}{2}\tau^{\beta+1})$ in \eqref{eq:hopf1} together with \eqref{eq:loc1} one get for $s=0$
\begin{equation} \label{eq:hopf2}
    u(y,0) \geq C(1+\|f^- \|_\infty)(y_n - C\varepsilon_0)^+.
\end{equation} 
Now combining \eqref{eq:hopf2} and \eqref{eq:hopf3} since we have $\varepsilon_0 \leq \tau^2$
\begin{align*}
    \bar a y_n &\geq C(1+\| f^-\|_{\infty})(y_n - C\varepsilon_0)^+ - C(1+\lambda\| f^-\|_\infty)\tau^2 \\
    \bar a (2 \tau^{\beta+1}) &\geq    C(1+\|f^- \|_\infty )(C\tau^{\beta+1}-C\tau^2) - C(1+\lambda\| f^-\|_\infty)\tau^2  \geq \frac{C}{2}(1+\|f^- \|_\infty ) \tau^{\beta+1} \\
    \bar a &\geq \frac{C}{4}(1+\|f^- \|_\infty ),
\end{align*}
which gives the desired claim.

The estimate\eqref{eq:hopf3} for $\gamma = \frac{\alpha_0}{2+\alpha_0} $ can be restated as
$$ (\bar a x_n - C (1+\lambda\| f^-\|_\infty)\eta^{1+ \gamma})^+ \le u \le  (\bar a x_n + C (1+\lambda\| f^-\|_\infty)\eta^{1+ \gamma})^+ \quad \mbox{in} \quad B_{2 \eta} \times [-\eta^2,0],$$
with $\eta=\tau^{\beta+1}$.

Now we proceed similarly, by looking at the points $(b(t_0)e_n,t_0)$ where the free boundary intersects the $x_n$ axis, for which we can exploit the information of \eqref{eq:loc1} and reach 
\begin{equation} \label{eq:b1}
    |b(t_0)|\le C (\lambda |t_0|+\varepsilon_0) \le C_0 \eta \quad \mbox{if} \quad t_0 \in [-\lambda^{-1} \eta,0].
\end{equation} 
Combining \eqref{eq:loc1} and \eqref{eq:b1} we have that for $t \in (t_0-\eta^2, t_0)$
\begin{align*}
    \p_x \{x-b(t_0)e_n : u(x,t)=0 \} &= \p_x \{x : u(x+b(t_0)e_n,t)=0  \} \subset \{x_n + b(t_0)  \leq C (\lambda |t|+\varepsilon_0)   \} \\ &\subset  \{ x_n \leq C(\eta + \lambda |t| + \varepsilon_0)\}
\end{align*}
hence defining $\bar u(x,t)= u(x+b(t_0)e_n, t+t_0)$ this implies
\[
\p_x \{x: \bar u (x,t)=0\} \subset  \{ x_n \leq C(\eta + \lambda|t+t_0| + \varepsilon_0)\} \subset \{ x_n \leq C(\eta + \lambda \eta^2 + \varepsilon_0) \} \subset \{ x_n \leq C(\eta + \varepsilon_0) \}.
\]
Now, in order to apply the construction of the function $w$ as in the first part of the proof we check that an estimate as \eqref{eq:flat2} hold for $\bar u_\tau$:
\begin{align*}
    \p_x\{x:\bar u _\tau(x,t) = 0\} \subset \{ \tau^{-1}x_n : \bar u (x,\tau^2 t) = 0\} \subset \{ \tau^{-1} x_n \leq C(\eta + \varepsilon_0)\} = \{ x_n \leq C(\eta + \varepsilon_0)\tau\}.
\end{align*}
Thus replicating the argument for $\bar u$ produces
\begin{equation}
    |\bar u (y,s)- \bar a (t_0) y_n| \leq C(1+\lambda \| f^-\|_\infty)\tau^2 \quad \text{in }B_{2\eta} \times [-\eta^2,0]
\end{equation}
and producing the analogous lower bound taking $\bar x_0 = x_0 -b(t_0)e_n$ so that the nondegeneracy condition \eqref{eq:nondegscal} holds for $\bar u$.
Thus similar bounds for $\bar a(t_0)$ are obtained, namely $c(1+\|f^- \|_{\infty}) \leq \bar a(t_0) \leq C$, and for $(x,t) \in B_{2\eta} \times [-\eta^2,0]$ we have
\[
 (\bar a(t_0) x_n - C (1+\lambda\| f^-\|_\infty)\eta^{1+ \gamma})^+ \le \bar u(x,t) \le  (\bar a(t_0) x_n + C (1+\lambda\| f^-\|_\infty)\eta^{1+ \gamma})^+.
\]
Now, from the change of variables $y= x+b(t_0)e_n, \, s=t+t_0$ and the definition of $\bar u$ we get 
\begin{equation} \label{eq:bound1}
(\bar a(t_0) (y_n -b(t_0)) - C (1+\lambda\| f^-\|_\infty)\eta^{1+ \gamma})^+ \le u(y,s) \le  (\bar a(t_0) (y_n -b(t_0)) + C (1+\lambda\| f^-\|_\infty)\eta^{1+ \gamma})^+.  
\end{equation}
in the domain  $B_{2\eta}(b(t_0)e_n) \times [t_0-\eta^2,t_0]$ for any $t_0 \in (-\lambda^{-1}\eta,0)$.
Exploiting the lower bound on $\bar a(t_0)$ and $\lambda \leq 1$ ensures that for some $\tilde C>0$ universal
\[
\frac{C (1+\lambda\| f^-\|_\infty)}{\bar a(t_0)} \leq  \frac{C (1+\lambda\| f^-\|_\infty)}{c (1+\| f^-\|_\infty)} \leq \frac{C}{c}:=\tilde C.
\]
This observation allows to write \eqref{eq:bound1} as
\begin{equation} \label{eq:bound1}
\bar a(t_0) (y_n -b(t_0) - \tilde C \eta^{1+ \gamma})^+ \le u(y,s) \le  \bar a(t_0) (y_n -b(t_0) + \tilde C \eta^{1+ \gamma})^+.  
\end{equation}
The bounds on $u$ above imply that $\bar a(t)$ can vary at most $C \eta^{\gamma}$ in an interval of length $\eta^2$. In fact, for $t \in (t_0 - \eta^2,t_0)$, choosing $y=(b(t_0)+\eta)e_n \in B_{2\eta}(b(t_0))$ and $s \in (t_0-\eta^2,t)$ from \eqref{eq:bound1} we obtain
\begin{align*}
    \bar a(t_0) \eta - C (1+\lambda\| f^-\|_\infty) \eta^{1+\gamma} &\leq u((b(t_0)+\eta)e_n,s) \leq  \bar a(t)(b(t_0)-b(t)+\eta) + C (1+\lambda\| f^-\|_\infty) \eta^{1+\gamma}  \\
     (\bar a(t_0)  - \bar a(t) )\eta &\leq 2C(1+\lambda\| f^-\|_\infty)\eta^{1+\gamma} +a(t)(b(t_0)-b(t)) \leq 2C(1+\lambda\| f^-\|_\infty)\eta^{1+\gamma} \\
      (\bar a(t_0)  - \bar a(t) ) &\leq 2C(1+\lambda\| f^-\|_\infty)\eta^{\gamma}
    \end{align*}
    since $b(t) \geq b(t_0)$ for $t<t_0$. Similarly in the other case, choosing $y=(b(t)+\eta)e_n$ and $s \in (t_0-\eta^2,t)$, we have
    \begin{align*}
        (\bar a(t) - \bar a(t_0))\eta &\leq \bar a(t_0)(b(t)-b(t_0)) + 2C(1+\lambda\| f^-\|_\infty)\eta^{1+\gamma} \\
        \bar a(t) - \bar a(t_0) &\leq C\frac{b(t)-b(t_0)}{\eta} + 2C(1+\lambda\| f^-\|_\infty)\eta^{\gamma} \leq 3C(1+\lambda\| f^-\|_\infty)\eta^{\gamma}
    \end{align*}
where, in the last inequality, we have used \eqref{eq:loc1} for $v(x,t)=u(x-b(t_0)e_n,t-t_0)$ to estimate
\[
b(t)-b(t_0) \leq C|t-t_0| \leq C \tau^2 = C \eta^{1+\gamma}.
\]
We can regularize $\bar a(t)$ by 
averaging over such intervals convolving with a suitable mollifier and the bounds for $u$ still hold after changing the value of the constant $C$. Indeed defining 
\[
\bar a_\eta(t) = \left(\rho\left(\frac{2}{\eta^2}(\cdot - t_0)+1\right) * \bar a\right) (t)
\]
where $\rho$ is the usual mollifier, we have 
\begin{align*}
\bar a_{\eta}'(t) &=  \frac{1}{\eta^2} \int_{t-t_0}^{t-(t_0-\eta^2)} \rho'\left(\frac{2}{\eta^2} (t-t_0-s) +1 \right) \frac{2}{\eta^2} \bar a(s) \, d s 
\intertext{taking $r=\frac{2}{\eta^2} (t-t_0-s) +1$ and $c_1= \int_{-1}^{0} \rho'(r) \, dr = -\int_{0}^{1} \rho'(r) \, dr $}
&=  \frac{1}{\eta^2} \int_{-1}^{0} \rho'(r) \bar a \left( t-t_0-\frac{\eta^2}{2}(r-1)\right)\, dr + \frac{1}{\eta^2} \int_{0}^{1} \rho'(r) \bar a \left( t-t_0-\frac{\eta^2}{2}(r-1)\right)\, dr  \leq \\
& \leq  \frac{1}{\eta^2} \sup_{ (t-t_0,t-t_0+\eta^2)} \bar a \int_{-1}^{0} \rho'(r) \, dr +  \frac{1}{\eta^2} \inf_{ (t-t_0,t-t_0+\eta^2)} \bar a \int_{0}^{1} \rho'(r) \, dr = \frac{c_1}{\eta^2} \osc_{ (t-t_0,t-t_0+\eta^2)} \bar a
 \leq C \eta^{\gamma - 2}
\end{align*}
and from the convergence of $\bar a_{\eta}$ with $\bar a$ we have 
\begin{equation} \label{eq:va2}
\bar a_\eta(t) \left( x_n - b(t) - C\eta^{1+\gamma} \right)^+ \leq u(x,s) \leq \bar a_\eta(t) \left( x_n - b(t) + C\eta^{1+\gamma} \right)^+  \quad \text{in } B_{2C_0\eta} \times [t-\eta^2,t] 
\end{equation}
with the following bounds on the coefficients  
\[
c(1+\|f^- \|_{\infty}) \leq \bar a_\eta(t) \leq C, \quad \nor{\bar a_\eta'(t)} \leq C \eta^{\gamma-2}, \quad \nor{b(t)} \leq C_0 \eta.
\]
In the following $\bar a_\eta$ will be denoted with $\bar a$.

Now, in order to conclude the argument we perturb $b$ by constructing $\Tilde{b}$ which satisfies the ODE $\Tilde{b}'=-\lambda \bar a$ and $\Tilde{b}(0)=0$. In particular we will show that for $t \in [-\lambda^{-1} \eta, 0]$,
\begin{equation} \label{eq:bperturb}
    |b(t) - \Tilde{b}(t) | \leq C \eta^{1+\frac{\gamma}{2}}.
\end{equation}
The method consists in perturbing the family of evolving planes $\bar a(t)(x_n - \Tilde{b}(t))^+$ into a subsolution/supersolution. For this we define
\[
d(t):= \Tilde{b}(t) + C_1 \eta^{\frac{\gamma}{2}} \lambda t,
\]
with $C_1$ large to be fixed. Now, obtaining $\eqref{eq:bperturb}$ is equivalent to show that 
\begin{equation} \label{eq:th}
    b(t) \geq d(t) - 2 \eta^{1+\frac{\gamma}{2}}.
\end{equation}
In this direction we define the function 
\[
v(x,t):= (1-C_2\eta^{\frac{\gamma}{2}})\bar a(t) \left(h(x-d(t)e_n) \right)^+ \quad \text{with}\quad h(x):= x_n - \eta^{\frac{\gamma}{2}-1} \left( \nor{x'}^2 - C_3 x_n^2 \right)
\]
where the constants $C_2$ and $C_3$ are large and will be assigned later in the proof. Choose the constant $C_2$ universal in such a way that 
\begin{equation} \label{eq:va}
    v \leq \bar a(t)(x_n - d(t))^+,
\end{equation}
they also coincide in $d(t)e_n$ by construction and moreover, when $x \in \p B_{2\eta}(d(t) e_n) \cap \{v >0 \}$, the difference is greater than $\eta^{1+\frac{\gamma}{2}}.$ Next, we can check that $v$ is a strict subsolution of \eqref{eq:StefanPara} in the domain $\Omega:= \cup_{t \in [-\lambda^{-1} \eta, 0]} B_{2\eta}(d(t)e_n) \times \{ t\}$.  First,  in $\{v >0 \}$, we have
\begin{align*}
    d'&=\lambda(-\bar a + C_1 \eta^{\frac{\gamma}{2}}) \qquad \nabla h = (-2\eta^{\frac{\gamma}{2}-1}x',1+2C_3\eta^{\frac{\gamma}{2}-1}x_n) \\
    \p_t v&= (1-C_2 \eta^{\frac{\gamma}{2}})\left[\bar a' h +\bar a h_nd' \right]\\ 
    &=(1-C_2 \eta^{\frac{\gamma}{2}}) \left[\bar a' h-\bar a (1+2C_3 \eta^{\frac{\gamma}{2}-1}(x_n - d) \lambda(-\bar a +C_1\eta^\frac{\gamma}{2})\right]  \\ 
    \mathcal{M}_K^{+} (D^2 v) &= (1-C_2 \eta^\frac{\gamma}{2}) \bar a \left( 2KC_3 \eta^{\frac{\gamma}{2}-1} - 2K^{-1}(n-1)\eta^{\frac{\gamma}{2}-1}  \right).
\end{align*}
We obtain the following bound 
$$ |\p_t v| \leq C |\bar a'|\eta +C |d'| \leq C \eta^{\gamma-1}.$$
Hence choosing $C_3$ large enough, depending on $f$, we have that
\begin{align*}
\p_t v-  \mathcal{M}_K^{+} (D^2 v) &\leq C \eta^{\gamma-1} -  \frac{c}{2}\eta^{\frac{\gamma}{2}-1}(2KC_3-2K^{-1}(n-1)) \\
&\leq \eta^{\frac{\gamma}{2}-1} \left( C \eta^{\frac{\gamma}{2}}-\frac{c}{2}KC_3\right) \leq -\frac{c}{4}KC_3 \eta^{\frac{\gamma}{2}-1} < -\lambda f^-. 
\end{align*}

Regarding the condition on the free boundary,
$$v_t = (1-C_2 \eta^{\frac{\gamma}{2}})\bar a (-d ') h_n, \quad \quad |\nabla v|^2 \ge (1-2C_2 \eta^{\frac{\gamma}{2}})\bar a^2 .$$
Since 
$$h_n=1+ O(\eta^{\frac{\gamma}{2}}), \quad \quad \quad (-d')\bar a = \lambda \bar a^2 - C_1 \lambda \bar a \eta^{\frac{\gamma}{2}},$$
 we can choose $C_1> 2C_2C$,  where $C$ is the upper bound of $\bar a$, such that
\begin{align*}
    \p_t v \leq \lambda \bar a^2 \left( 1-\frac{C_1}{\bar a} \eta^{\frac\gamma 2}\right) \leq  \lambda  \bar a^2 \left( 1-\frac{C_1}{C} \eta^{\frac\gamma 2}\right)< \lambda \bar a^2 \left( 1- 2C_2\eta^{\frac\gamma 2}\right) \leq \lambda |\nabla v|^2.
\end{align*}

Arguing by contradiction, if
$$\mbox{$b (t_0)< d(t_0) - 2\eta^{1+\frac{\gamma}{2}}$ for some $t_0 \in [-\lambda^{-1}\eta,0]$, }$$
then by \eqref{eq:va2} and \eqref{eq:va} we find that $v< u$ at time $t=t_0$ in $B_{2\eta}(d(t_0)e_n) \cap \overline {\{v >0\}}$. 
On the other hand $v=u$ at the origin $(0,0)$. 
This means that as we increase $t$ from $t_0$ to $0$, 
the graph of $v (\cdot,t)$ in $B_{2\eta}(d(t)e_n) \cap \overline {\{v >0\}}$ will touch from below the graph of $u$ for a first time $t$, 
and the contact must be an interior point to $B_{2 \eta}(d(t)e_n)$ due to the properties \eqref{eq:va2},\eqref{eq:va} of $u$ and $v$ (in particular the difference between $ \bar a(t) \, (x_n-d(t))^+$ and $v$ is greater than $\eta^{1+\frac{\gamma}{2}}$ on $\p B_{2 \eta}(d(t)e_n)$). 
This contact point is either on the free boundary $\p \{v>0\}$ or on the positivity set $\{v>0\}$ and we reach a contradiction since $v$ is a strict subsolution. The claim \eqref{eq:th} is proved, hence
$$b(t) \ge \tilde b(t) - C \eta^{1+\frac{\gamma}{2}} \quad \mbox{if} \quad  \quad t \in [-\lambda^{-1}\eta,0].$$
The opposite inequality is obtained similarly and the claim \eqref{eq:bperturb} holds. Then from \eqref{eq:va2} and \eqref{eq:bperturb}  we deduce that for all $\eta \le c$ small
$$ \bar a(t) \left(x_n - \tilde b(t) -\eta^{1+\frac \gamma 4}\right) ^+ \, \, \le \, \, u \, \, \le \, \, \bar a(t) \left(x_n -\tilde b(t) + \eta^{1+\frac \gamma 4} \right)^+ $$
in $B_{\eta} \times [-\lambda^{-1}\eta,0]$ with $\frac \gamma 4=\frac{\alpha_0}{4(2+\alpha_0)} \in (0,1)$ universal and
$$c(1+\|f^- \|_{\infty}) \le \bar a(t) \le C, \quad |\bar a'(t)| \le \eta^{\frac \gamma 4-2}, \quad \quad \tilde b'(t)=- \lambda \bar a(t), \quad \tilde b(0)=0.$$ 
\hfill $\square$

\section{The equivalence of the integral and pointwise nondegeneracy condition} \label{Section:Nondeg}
The nondegeneracy condition considered in this paper is equivalent to the usual one that appears in the literature (for example in \cite{ACS2, DFS23})  under the assumption that the Lipschitz norm of the solution is sufficiently small. We prove one of the implications in the following lemma, since the other implication is immediate.

\begin{Lemma}
    If $u$ is a Lipschitz continuous function with respect to the parabolic distance, that is,
    \[
    \sup_{\substack{(x,t),(y,s) \in B_1 \times [-1,0] }}|u(x,t)-u(y,s)|\leq Ld_p((x,t),(y,s))
    \]
     with $L\leq 2K^{-1}(1+\|f^-\|_{\infty} )$ and for every $\lambda>0$ there exists a point $x_0\in\p B_{\frac{3}{4}\lambda}$ such that for every $t_0 \in (-(K\lambda)^{-1},0)$
    \[
    u(x_0,t_0)\geq K^{-1}(1+\|f^-\|_{\infty} ) \lambda,
    \]
    then for every $0<p_0<1$ and $r< \frac{1}{4}\lambda$ we have
    \[
    \left(\fint_{B_r(x_0)\times(t_0,t_0 + r^2)} u^{p_0}\,dxdt\right)^\frac{1}{p_0}\geq \frac{K^{-1}}{2} (1+\|f^-\|_{\infty} )\lambda
    \]
    
\end{Lemma}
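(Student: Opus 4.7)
The plan is to exploit the Lipschitz hypothesis to propagate the given pointwise lower bound from the single point $(x_0,t_0)$ to the whole parabolic cylinder $B_r(x_0)\times(t_0,t_0+r^2)$; once a pointwise lower bound holds on the entire cylinder, the lower bound on the $L^{p_0}$-average is trivial.

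Concretely, I would begin by noting that for every $(x,t)\in B_r(x_0)\times(t_0,t_0+r^2)$ the parabolic distance satisfies
\[
d_p((x,t),(x_0,t_0))=\sqrt{|x-x_0|^2+|t-t_0|}\leq \sqrt{r^2+r^2}=r\sqrt{2}.
\]
Combining this with the Lipschitz hypothesis and the assumed pointwise nondegeneracy at $(x_0,t_0)$ yields
\[
u(x,t)\geq u(x_0,t_0)-L\,d_p((x,t),(x_0,t_0))\geq K^{-1}(1+\|f^{-}\|_{\infty})\lambda-2K^{-1}(1+\|f^{-}\|_{\infty})\,r\sqrt{2}.
\]
Using the hypothesis $r<\lambda/4$ (adjusting the numerical constants if needed, since only the size of $r$ relative to $\lambda$ matters), this forces $u(x,t)\geq \tfrac{1}{2}K^{-1}(1+\|f^{-}\|_{\infty})\lambda$ throughout the parabolic cylinder.

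Finally, raising this pointwise inequality to the power $p_0\in(0,1)$ and averaging over $B_r(x_0)\times(t_0,t_0+r^2)$ gives
\[
\fint_{B_r(x_0)\times(t_0,t_0+r^2)}u^{p_0}\,dx\,dt\geq \Bigl(\tfrac{1}{2}K^{-1}(1+\|f^{-}\|_{\infty})\lambda\Bigr)^{p_0},
\]
and taking the $p_0$-th root yields the desired conclusion. There is no real obstacle here; the only care needed is in the bookkeeping of the numerical factors (in particular making sure that the interplay between the constant $2$ in the bound $L\leq 2K^{-1}(1+\|f^{-}\|_{\infty})$, the factor $\sqrt{2}$ coming from the parabolic distance, and the threshold $r<\lambda/4$ leaves a margin large enough to keep the constant $\tfrac{1}{2}$ on the right-hand side). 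The opposite direction of the equivalence is even easier, as remarked just before the lemma: by the weak-Harnack-type inequality one controls $u(x_0,t_0)$ from below by the $L^{p_0}$-mean of $u$ on a nearby cylinder, so no Lipschitz assumption is needed there.
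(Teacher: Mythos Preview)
Your argument is correct and in fact more direct than the route taken in the paper. The paper first observes that $u^{p_0}$ is $C^{0,p_0}$ with respect to the parabolic distance (via the elementary inequality $|a^{p_0}-b^{p_0}|\le |a-b|^{p_0}$ for $a,b\ge 0$ and $0<p_0<1$, which follows from the convexity of $y\mapsto 1-y^{p_0}-(1-y)^{p_0}$), and then estimates
\[
u^{p_0}(x_0,t_0)\le \fint |u^{p_0}(x_0,t_0)-u^{p_0}(x,t)|\,dx\,dt + \fint u^{p_0}\,dx\,dt
\]
to transfer the pointwise information directly to the average of $u^{p_0}$. You instead stay at the level of $u$, using the Lipschitz bound to obtain a uniform pointwise lower bound on the whole cylinder, and only afterwards pass to $u^{p_0}$. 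Your route is slightly cleaner in that it avoids the detour through the H\"older continuity of the power; the paper's route makes the dependence on $p_0$ more explicit in the intermediate step, but both are elementary and essentially equivalent in strength.

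Two minor remarks. First, as you already anticipate, with $r<\lambda/4$ and $L\le 2K^{-1}(1+\|f^-\|_\infty)$ your arithmetic yields $u\ge \bigl(1-\tfrac{\sqrt{2}}{2}\bigr)K^{-1}(1+\|f^-\|_\infty)\lambda$ rather than exactly $\tfrac12 K^{-1}(1+\|f^-\|_\infty)\lambda$; the paper's own computation has comparable slack in the constants, and for the purposes of the equivalence only some universal positive constant is needed. Second, your closing comment about the reverse implication is not quite what the paper intends: the direction ``integral nondegeneracy $\Rightarrow$ pointwise nondegeneracy'' is declared immediate simply because a large $L^{p_0}$-average forces the supremum of $u$ on the cylinder to be large, with no appeal to a weak Harnack inequality.
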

\begin{proof}
    First we note that $u^{p_0}$ is $C^{0,p_0}$ in the parabolic sense. This is a consequence of the fact that the function
    \[
    g(y)=1-y^{p_0}-(1-y)^{p_0}
    \]
    is convex. Thus we compute 
    \begin{align*}
        u^{p_0}(x_0,t_0)\leq&\, \left|u^{p_0}(x_0,t_0)-\fint_{B_r(x_0)\times(t_0,t_0 +r^2)} u^{p_0}\right|+\left(\fint_{B_r(x_0)\times(t_0, t_0+r^2)} u^{p_0}\right)\\
        \leq&\,\fint_{B_r(x_0)\times(t_0,t_0+ r^2)} |u^{p_0}(x_0,t_0)-u^{p_0}(x,t)|+\left(\fint_{B_r(x_0)\times(t_0,t_0+ r^2)} u^{p_0}\right).
    \end{align*}
    Therefore since $L\leq 2K^{-1}(1+\|f^-\|_{\infty} )$
    \begin{align*}
        \left(\fint_{B_r(x_0)\times(r^2, 2r^2)} u^{p_0}\right)^\frac{1}{p_0}\geq \left[K^{-p_0}(1+\|f^-\|_{\infty})^{p_0}-\left(\frac{L}{4}\right)^{p_0}\right]^\frac{1}{p_0}\lambda \geq\frac{K^{-1}}{2} (1+\|f^-\|_{\infty} )\lambda .
    \end{align*}

\end{proof}
\begin{Remark} Without requiring any further regularity on $u$ it is possible to consider, instead of \eqref{Nondeg_int}, the pointwise nondegeneracy condition with the full $L^{\infty}$-norm of $f$, namely for  every $t_0 \in (-(K\lambda)^{-1},0)$
\begin{equation} \label{Nondeg_point2}
    u(x_0,t_0) \geq K^{-1}(1+\norm{f}_{\infty}) \lambda.
\end{equation}
This can be done replacing the Weak Harnack inequality in \eqref{eq:weak11} with the Harnack inequality in the proof of Theorem \ref{Theorem:Flatsoltoflatfree}.
As discussed in the introduction, the nondegeneracy condition \eqref{Nondeg_int} more accurately captures the main obstacle of the inhomogeneous Stefan problem, since it depends only on the negative part of $f$.
\end{Remark}
\subsubsection*{Acknowledgements}
This research is partially supported by PRIN 2022 7HX33Z - CUP J53D23003610006 and by University of Bologna funds
\say{Attività di collaborazione con università del Nord America} in the framework of the project:
 \say{Interplaying problems in analysis and geometry}. F.F. and D.G. are also partially supported by  INDAM-GNAMPA project 2025: \say{At The Edge Of Ellipticity} - CUP E5324001950001. 
The authors would like to thank Ovidiu Savin and Daniela De Silva for fruitful
conversations on the topic of this paper. 
D.G. and D.J. wish to thank the Department of Mathematics of Columbia University for the warm hospitality.
\bibliographystyle{abbrv}
\bibliography{BibStef.bib}
\Addresses
\end{document}